\newtheorem{theorem}{Theorem}[section]
\newtheorem{lemma}[theorem]{Lemma}
\newtheorem{propo}[theorem]{Proposition}
\newtheorem{definition}[theorem]{Definition}
\newtheorem{remark}[theorem]{Remark}
\title{On Bott-Morse Foliations and their Poisson structures in dimension $3$}
\author{M. Evangelista-Alvarado}
\address{Instituto de Matem\'aticas - Universidad Nacional Aut\'onoma de M\'exico\\Circuito Exterior, Ciudad Universitaria\\Coyoac\'an, 04510\\Mexico City\\Mexico}
\email{miguel.eva.alv@matem.unam.mx}
\author{P. Su\'arez-Serrato}
\address{Department of Mathematics, University of California Santa Barbara, CA, USA. \\ On leave from; Instituto de Matem\'aticas - Universidad Nacional Aut\'onoma de M\'exico\\Circuito Exterior, Ciudad Universitaria\\Coyoac\'an, 04510\\Mexico City\\Mexico}
\email{pablo@im.unam.mx}
\author{J. Torres Orozco}
\address{Centro de Ciencias Matem\'aticas\\ Antigua Carretera a P\'atzcuaro 8701\\
Col. Ex Hacienda San José de la Huerta, 58089\\ Morelia, Michoacán\\ Mexico}
\email{jonatan@cimat.mx}
\author{R. Vera}
\address{Instituto de Matem\'aticas - Universidad Nacional Aut\'onoma de M\'exico\\Circuito Exterior, Ciudad Universitaria\\Coyoac\'an, 04510\\Mexico City\\Mexico}
\email{vera@im.unam.mx, rvera.math@gmail.com}
\date{\today}
\begin{document}

\begin{abstract}
We show that a Bott-Morse foliation in dimension $3$ admits a linear, singular, Poisson structure of rank $2$ with Bott-Morse singularities. We provide the Poisson bivectors for each type of singular component, and compute the symplectic forms of the characteristic distribution. 
\end{abstract}
\maketitle

\section{Introduction}

The study of foliations on $3$-manifolds has had considerable influence on the direction of low dimensional topology. Early on Lickorish \cite{L65}, and independently Novikov \cite{N65}, and Zieschang, showed that every $3$-manifold admits a codimension-one foliation. The relevance of foliations has been comprehensively presented by Calegari \cite{C07}. The relationship between foliation theory and other topics of $3$-manifolds is still being explored (e.g. \cite{B16,CDMV16,E16,V16}). Various analytic, geometric, and topological complications arise when singularities are allowed to exist in a foliation. In this note we investigate foliations with exclusively Bott-Morse singularities in the context of Poisson geometry, continuing the foundational work in this direction by Sc\'ardua and Seade \cite{SS09, SS11}. These foliations have singularities that are modeled locally by Bott-Morse functions.\\ 

{\bf Example 1:} On the unit sphere $S^3$ inside ${\bf R}^4$ let $f$ be a Morse function defined using a height function with respect to an axis. The level sets of $f$ form a foliation of $S^3$, with leaves that are $2$-spheres and two singular polar points. This is one of the prototypes of a Bott-Morse foliation. \\

It has been suggested that it would be interesting to comprehend the Poisson manifolds equipped with these kinds of singularities (see Example 4 \cite{SS09}). We contribute to this circle of ideas with:

\begin{theorem}
\label{thm:Bott-Morse}
Let $M$ be a closed, orientable, connected smooth manifold of dimension $3$ equipped with a  codimension-one foliation $\mathcal{F}$ with Bott-Morse singularities. Then there exists a Poisson structure on $M$ of maximal rank $2$ supported on $\mathcal{F}$ which vanishes precisely at the Bott-Morse singularities. The associated bivectors are linear and can be found in table (\ref{tablefour}). The induced symplectic forms on the leaves are given by:
$$\frac{x_{1}}{k\sqrt{x_{1}^{2}+x_{2}^{2}}}\omega_{area}(q)$$
Here $k$ is a non-vanishing function (see \S \ref{S6}), $\omega_{area}$ denotes the canonical area form of the Euclidean plane, and $(x_1, x_2)$ are coordinates on the leaves of $\mathcal{F}$. If $\mathcal{F}$ is compact, then the Poisson structure obtained is complete.
\end{theorem}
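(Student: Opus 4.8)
The plan is to reduce the construction to the three--dimensional dictionary between bivector fields and $1$--forms. Since $M$ is orientable I fix a volume form $\Omega$ and attach to every bivector $\pi$ the $1$--form $\eta_\pi := \iota_\pi\Omega$ (so that a bivector with components $(P,Q,R)$ in a positively oriented frame corresponds to the $1$--form with the same components). First I would record the elementary but decisive fact that, under this correspondence, the Jacobi identity is equivalent to Frobenius integrability:
\[
[\pi,\pi]=0 \quad\Longleftrightarrow\quad \eta_\pi\wedge d\eta_\pi = 0 .
\]
Consequently a Poisson structure of rank $2$ on $M$ is the \emph{same datum} as an integrable $1$--form; its symplectic leaves are exactly the integral surfaces of $\ker\eta_\pi$, and $\pi$ has rank $2$ precisely where $\eta_\pi\neq 0$. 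This turns the problem into the production, from $\mathcal F$, of a single global $1$--form $\eta$ that defines $\mathcal F$ on its regular part and vanishes exactly along the Bott--Morse singular set.

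Second, I would build $\eta$ from the local normal forms. By the Sc\'ardua--Seade classification each singular component is a point or a circle and carries a local first integral $f$, a Bott--Morse function whose level sets are the leaves; on such a chart I set $\eta = df$. Being exact, $df$ is closed, so the integrability condition $df\wedge d(df)=0$ holds automatically and the dual bivector is a genuine Poisson structure; moreover $df$ vanishes exactly on the critical manifold, i.e.\ on the singular component, and since $f$ is a nondegenerate quadratic in the normal directions the dual bivector is \emph{linear} in the Bott--Morse coordinates. These are the entries of the table: for instance the center--point model $f=\tfrac12(x^2+y^2+z^2)$ yields the $\mathfrak{so}(3)$ Lie--Poisson bivector $x\,\partial_y\wedge\partial_z + y\,\partial_z\wedge\partial_x + z\,\partial_x\wedge\partial_y$. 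The principal obstacle, and where I expect the real work to lie, is the \emph{globalization}: patching these closed local forms with a defining form for the regular foliation into one smooth $\eta$ whose zero set is \emph{exactly} the singular set and whose dual bivector is smooth there. This needs a coorientation of $\mathcal F$ (equivalently, transverse orientability, which orients the leaves and makes the local data positive multiples of one another), after which a partition of unity produces a global $\eta$; a positive combination of proportional $1$--forms is again a nonvanishing positive multiple off the singular set, so $\ker\eta = T\mathcal F$ is preserved, and integrability $\eta\wedge d\eta=0$ — a closed condition holding throughout the regular part — extends across the lower--dimensional singular set by continuity. The delicate point is maintaining first--order vanishing at the singularities so that the dual bivector stays smooth and agrees with the linear models on smaller neighbourhoods.

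Finally, I would read off the leafwise symplectic form by inverting $\pi$ along the leaves: on each regular leaf $\pi$ is nondegenerate and $\omega := (\pi|_{\mathrm{leaf}})^{-1}$ is the induced area form. Carrying out this inversion in the coordinates of each model yields the stated expression
\[
\omega = \frac{x_1}{k\sqrt{x_1^2+x_2^2}}\,\omega_{area}(q),
\]
in which the factor $1/\sqrt{x_1^2+x_2^2}$ records that $\pi$ vanishes to first order along the singular component, so that $\omega$ blows up there while $\pi$ itself remains smooth. For the completeness claim, I would observe that when $\mathcal F$ is a compact foliation every symplectic leaf is a closed surface, hence a complete symplectic manifold on which every Hamiltonian vector field is complete; since $\pi$ vanishes at the singularities, this yields completeness of the Poisson structure.
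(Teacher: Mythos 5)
Your proposal is correct in outline, but it follows a genuinely different route from the paper, so a comparison is in order. The paper never invokes the three-dimensional duality between bivectors and $1$-forms. Instead it works on the four-manifold $M^3\times S^1$, feeds the local Bott--Morse function $C_1$ and the angle coordinate $C_2=t$ as prescribed Casimirs into the Flaschka--Ratiu formula (Theorem \ref{T:Const-Poisson}) to produce the linear bivectors of table (\ref{tablefour}), restricts to $M^3$, computes the leafwise symplectic forms via the anchor map in \S\ref{S5}, and globalizes in \S\ref{S6} by interpolating \emph{bivectors}: a regular rank-$2$ Poisson structure $\pi_F$ on the regular part is rescaled by $g\sigma+\rho$, relying on Lemma \ref{L:1} (a nonvanishing multiple of a regular rank-$2$ Poisson structure is Poisson with the same leaves) and Lemma \ref{L:2} (two regular rank-$2$ structures with the same symplectic foliation are proportional). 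Your scheme --- encode rank-$\leq 2$ bivectors as $1$-forms $\iota_\pi\Omega$, observe that the Jacobi identity becomes Frobenius integrability $\eta\wedge d\eta=0$, take $\eta=df$ near each singular component, glue cooriented defining forms by a partition of unity, and extend integrability across the codimension-$\geq 2$ singular set by density --- is more elementary and intrinsically three-dimensional: it makes both the linearity of the local models and the Jacobi identity transparent (exactness of $df$), and it dispenses with the auxiliary $S^1$ factor and the citation of \cite{GSV}. What the paper's route buys is a construction that generalizes to prescribed Casimirs in any dimension and corank (the setting of \S\ref{S7}) and an explicit conformal family in $k$. Note that the two gluing mechanisms are secretly the same: your observation that two defining forms of the same cooriented foliation differ by a positive factor is exactly Lemma \ref{L:2} read through the duality, and your convex combination plays the role of the factor $g\sigma+\rho$. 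Your explicit appeal to transverse orientability is a hypothesis the paper uses silently (it is built into Definition \ref{def:BM-foliation}); without it neither a global defining form nor the paper's $\pi_F$ exists.

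Two caveats. First, the theorem asserts a specific formula for the induced symplectic forms, so a complete proof must derive it; you only name the method (``invert $\pi$ along the leaves''), which is indeed what the proposition of \S\ref{S5} implements by exhibiting tangent vectors $u_q,v_q$ and solving $\mathcal{B}_q(\alpha_q)=u_q$, but you do not carry out the computation in any of the models. Second, your interpretive remark that $\omega$ ``blows up'' along the singular component is inconsistent with the formula you quote, since the coefficient $x_1/\bigl(k\sqrt{x_1^2+x_2^2}\bigr)$ is bounded; this discrepancy is precisely the kind of thing the omitted computation would settle, so it signals that the step is genuinely missing rather than merely compressed. Finally, the point you single out as delicate --- ``maintaining first-order vanishing so that the dual bivector stays smooth'' --- is not actually delicate: smoothness of $\pi$ is automatic from smoothness of $\eta$, because $\iota_{(\cdot)}\Omega$ is a bundle isomorphism. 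What the gluing really requires is only that the partition of unity be identically $1$ on a neighborhood of each singular component, so that $\eta=df$ there; this makes the bivector coincide with the linear model and vanish exactly on $\mathrm{Sing}(\mathcal{F})$, which is how the paper also arranges its cut-off functions $\sigma$ and $\rho$.
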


Notice that this result provides a conformal family of Poisson structures, as the function $k$ varies. When such a foliation on $M$ with Bott-Morse singularities is transversally orientable and has no saddle-connections (see definition \ref{def:BM-foliation}), then its Poisson bivector vanishes only on two points or two circles, corresponding exactly to the singularities of the foliation. In the proof we combine methods to determine the bivectors and symplectic forms of a Poisson structure for which the associated Casimir functions can be described explicitly together with a gluing construction that collects all the possible local Poisson structures into a global one. This involves using the work of the second and fourth named authors \cite{GSV}, as we rely on certain $S^1$-invariant Poisson structures of dimension 4 in our arguments. Restricting these to 3-dimensional slices we find the Poisson structures associated to the original Bott-Morse foliation.\\

{\bf Example 2:} The abundance of foliations with Bott-Morse singularities in dimension $3$ can be seen with the help of Heegard splittings. A $3$--manifold that admits a genus--$g$ Heegard splitting for $g>0$ supports closed Bott–Morse foliations with $2g$ non-isolated center components, $2g-2$ isolated saddles, and leaves of genus in the range $1,\ldots, g$ (see \S 2.1 of \cite{SS11}). Furthermore, recall that a $3$--manifold with a genus--$g$  Heegard splitting also admits Heegard splittings for all genera $g'$ greater than $g$. Therefore it supports Bott–Morse foliations with genus--$g'$ leaves for all $g'>g$ as well.\\

Let us mention a few relevant relationships to put our result into the context of Poisson geometry. It was shown by Ibort and Mart{\'i}nez-Torres that every $3$--manifold admits a regular Poisson structure of rank 2 \cite{IM03}. The two examples described above illustrate how every $3$--manifold admits a foliation with Bott-Morse singularities. As a consequence, our main result allows us to find an associated singular Poisson structure of generic rank 2. Moreover, our result provides a quantitative perspective as we provide explicit formul{\ae} for the local forms. Poisson structures related to local fibrations have also been of recent interest, see for example Avenda{\~n}o-Vorobiev \cite{AV17}. As the structures we find are linear, there are Lie algebras associated to some of them, which can be compared to, for example, the descriptions of Lie-Poisson structures of Ginzburg-Weinstein \cite{GW92}. In particular, the celebrated linearization result of Conn \cite{C84} is superseded by our linear normal forms (see also Crainic-Fernandes \cite{CF11}). The equivalence classes of the Poisson structures described here can be understood in terms of weak Morita equivalence. As the genus of the Heegard splitting in example 2 increases, the fundamental groups of the leaves of the associated Poisson structure change, and therefore they are Morita inequivalent in the sense of Bursztyn-Weinstein \cite{BW05}.\\

After reviewing the definitions and background needed for our arguments in \S\ref{S2} and \S\ref{S3}, we proceed to describe the Poisson bivectors in \S\ref{sec:local} and symplectic forms associated to Bott-Morse foliations in \S\ref{S5}. These data come together in \S\ref{S6} to complete the global Poisson structure and the proof our main result. We include in  \S\ref{S7} some restrictions to the existence of compatible Poisson structures on Bott-Morse foliations in higher dimensions, pointing to potential extensions of this line of research. We end with a remark in \S\ref{S8} about the Poisson cohomology of these structures in the case of homogenous linear and quadratic coefficients are presented in tables \ref{tableseven} and \ref{tableeight}.\\ 

{\bf Acknowledgements:} PSS acknowledges \& thanks support from UNAM-DGAPA-PAPIIT-IN102716 and UC-MEXUS CN-16-43. RV thanks UNAM-DGAPA. JTO thanks support from FORDECyT 265667.

\section{Bott-Morse foliations}\label{S2}

This section follows notations used in \cite{SS09} and \cite{SS11}, where Bott-Morse foliations on dimension $3$ were described. 
\newline

Let $M^m$ be a closed, orientable, smooth manifold of dimension $m$, for $m\geq 3$. Let $\mathcal{F}$ be a codimension-one smooth foliation with singularities on $M$. Denote by ${\rm Sing}(\mathcal{F})$ the set of singular points of $\mathcal{F}$.

\begin{definition}
A smooth function $f: M\to {\bf R}$ is said to be a {\em Bott-Morse function} if the following conditions hold:
\begin{itemize}
\item [i)] The critical set of $f$ is a disjoint union of closed, connected embedded submanifolds $N_j$:
\[{\rm Crit}(f)=\sqcup_{j=1}^{t}N_{j}.\]
Such submanifolds are referred to as the {\em critical components} of the Bott-Morse function. 

\item [ii)] The function $f$ is non-degenerate when restricted to any critical component. 

\end{itemize}

\end{definition}

The non-degeneracy condition for a Bott-Morse function means that for each $p\in N_{j}$ and a small disc $\Sigma_{p}$ transversal to $N_{j}$, of complementary dimension, the restriction $f|_{\Sigma_{p}}$ is a Morse function with $p$ a Morse singularity. 

\begin{definition}
The singularities of the foliation $\mathcal{F}$ are called {\em Bott-Morse singularities} if:
\begin{itemize}
\item [i)] The singularity set can be decomposed as:
$${\rm Sing}(\mathcal{F})=\sqcup_{j=1}^{t}N_j$$
Here $N_j$ is a closed, connected submanifold of $M$ with ${\rm codim}(N_j)\leq 2$.

\item [ii)] In a neighborhood of each singular point, $\mathcal{F}$ is defined by a Bott-Morse function.
\end{itemize}
\end{definition}

Let $p\in N_j$ be a Bott-Morse singularity and $n_j$ be the dimension of the critical component $N_j$. Then there exist a neighborhood $V_p\subset M$ and a foliation $\mathcal{G}$, such that the restriction of $\mathcal{F}$ to $V_p$ is a product foliation $P\times \mathcal{G}$, for some disc $P\subset{\bf R}^{n_{j}}$. The foliation $\mathcal{G}$ is defined on a disc $D\subset{\bf R}^{m-n_{j}}$ whose fibers are given by a Morse function. This implies the existence of a local diffeomorphism $\varphi:V_p\to P\times D$. Said otherwise, we have that:
\begin{itemize}
\item ${\rm Sing}(\mathcal{F})\cap V_p = N_{j}\cap V_p$.
\item $\varphi(N_{j}\cap V_p) = P\times \{0\}\subset{\bf R}^{n_{j}}\times{\bf R}^{m-n_{j}}$. 
\item There exist local coordinates $(\bar{x},x)=(\bar{x}_{1},...,\bar{x}_{n_{j}}, x_{1},...,x_{m-n_{j}})\in V_p$ such that $N_{j}\cap V_p$ is defined by $\{(x_{1},...,x_{m-n_{j}})=0\}$
and $\mathcal{F}|_{V_p}$ is given by the level sets of a Morse function $J_{N_{j}}(\bar{x},x)=\Sigma_{j=1}^{m-n_{j}}\lambda_{j}x_{j}^{2}$, where $\lambda_j=\pm 1$.
\end{itemize} 

The discs $\Sigma_{p}=\varphi^{-1}(x(p\times D))$ are transverse to $N_{j}$, outside ${\rm Sing}(\mathcal{F})$. Denote by $\mathcal{G}(N_{j})=\mathcal{F}|_{\Sigma_{p}}$, the {\em transverse type} of $\mathcal{F}$ along $N_{j}$. It is a codimension-one foliation on $\Sigma_{p}$ with an ordinary Morse singularity at $\{p\}=N_{j}\cap \Sigma_{p}$. The Morse index is constant in $N_j$.

\begin{definition}
A critical component $N_{j}\subset {\rm Sing}(\mathcal{F})$ is called:
    \begin{enumerate}
        \item A {\bf Center} if the transverse type $\mathcal{F}|_{\Sigma_{p}}$ of $\mathcal{F}$ along $N_{j}$ is a center, that is, the Morse singularity $p$ has Morse index $0$ or $m-n_{j}$.
        \item A {\bf Saddle} if the transverse type $\mathcal{F}|_{\Sigma_{p}}$ is a saddle, that is, if the Morse singularity has Morse index different from $0$ or $m-n_{j}$.
    \end{enumerate}
\end{definition}

Given a saddle component $N_{j}\subset$ ${\rm Sing}(\mathcal{F})$, a {\em separatrix} of $N_j$ is a leaf $L$ of the foliation $\mathcal{F}$ such that its closure $\bar{L}$ contains $N_{j}$. This means that $L$ meets each small disc $\Sigma$ in ${\bf R}^{(m-n_{j})}$, which is transversal to $N_{j}$  in a separatrix of $\mathcal{F}|_{\Sigma}$.  In a neighborhood of $N_j$, the separatrices through $p$ are given by the relation:  $x^{2}_{1}+...+x^{2}_{r}=x^{2}_{r+1}+...+x^{2}_{n_{j}}\neq 0$, where $r$ is the Morse index. In a neighborhood of a center component the leaves of $\mathcal{F}$ are diffeomorphic to spheres $S^{(m-(n_{j}+1))}$.
\newline

 We say that $\mathcal{F}$ has a {\it saddle-connection} if there exist saddle components $N_{1}$, $N_{2}$, $N_{1}\neq N_{2}$, and a leaf $L$ of $\mathcal{F}$ which is simultaneously a separatrix of $N_{1}$ and $N_{2}$. If a leaf $L$ is a separatrix of $\mathcal{F}$ through $N$ and $L$ meets some transversal disc $\Sigma$ in two distinct separatrices of $\mathcal{F}|_{\Sigma}$ then we say $L$ is a {\it self-saddle-connection} of $\mathcal{F}$.    

\begin{definition}\cite{SS11}\label{def:BM-foliation}
We say that $\mathcal{F}$ is a Bott-Morse foliation if:
    \begin{enumerate}
        \item The singularities of $\mathcal{F}$ are of Bott-Morse type,
        \item $\mathcal{F}$ is transversally orientable, and
        \item $\mathcal{F}$ has no saddle-connections on $M$.
    \end{enumerate}
\end{definition}

If $M$ has dimension $3$, then the dimension of $N_j$ can be $0$ or $1$. If $\dim(N_j)=0$ there are two possible center singularities and two saddle singularities. If $\dim(N_j)=1$ there are two possible center singularities and one possible saddle singularity.

\begin{remark}
A foliation $\mathcal{F}$ is said to be {\em compact} if every leaf is compact. In this case there are no saddle components  [Proposition 1, \cite{SS09}]. If every leaf of $\mathcal{F}$ is closed off $\textnormal{Sing}(\mathcal{F})$, then $\mathcal{F}$ is said to be {\em closed}.  
\end{remark}

\section{Poisson structures}\label{S3}

We will now include the facts needed to understand the construction of Poisson structures with Bott-Morse singularities. The Schouten-Nijenhuis bracket is an operation which extends the Lie derivative on multi-vector fields $\left[ \cdot ,  \cdot \right]_{\mathrm{SN}} \colon \mathfrak{X}^{p}(M)  \times \mathfrak{X}^{q}(M) \rightarrow \mathfrak{X}^{p+q-1}(M)$. Among its numerous applications, it plays a fundamental role in Poisson Geomety.  

\begin{definition}
A {\em Poisson bivector}, or a {\em Poisson structure} on $M$ is a bivector field  $\pi \in \Gamma (\Lambda^2 TM )= \mathfrak{X}^2(M)$ satisfying $\left[ \pi, \pi \right]_{\mathrm{SN}}=0$. 
\end{definition}

Note  that every manifold $M$ admits a trivial Poisson structure by defining $\pi=0$ at every point $p\in M$. A class of non-trivial Poisson structures are symplectic manifolds. A Poisson structure can also be defined in terms of a bracket $\{ ,\}$ on $C^{\infty}(M)$. It satisfies a derivation rule, and it endows $C^{\infty}(M)$ with a Lie algebra structure. It follows that the bracket $\{g,h\}$ depends solely on the first derivatives of the functions $g$ and $h$. The Poisson bivector and the bracket are related by
\begin{equation*}\label{eq:bracket-bivector}
\{g,h\}=\pi(dg,dh).
\end{equation*}

The Poisson bivector satisfies the properties of bilinearity, skew-symmetry, and the Leibniz identity, which are defined by the bracket. The vanishing of the Schouten-Nijenhuis bracket of the bivector $\pi$ with itself corresponds to the Jacobi identity of the Poisson bracket. 
Nevertheless, in this paper we will only use the description of a Poisson structure through a bivector field. 

A Poisson bivector $\pi$ can be described locally, for coordinates $(x^1, \dots , x^n)$;
\begin{equation*}
\pi(x)=\frac{1}{2}\sum_{i,j=1}^n\pi^{ij}(x)\frac{\partial}{\partial x^i}\wedge \frac{\partial}{\partial x^j}.
\end{equation*}
Here $\pi^{ij}(x)=\pi(dx^i,dx^j)=-\pi(dx^j, dx^i)$.
\newline

Given a bivector  $\pi$ on $M$, a point $q\in M$, and  $\alpha_q\in T_q^*M$  it is possible to define a bundle map $\mathcal{B}:T^*M\to TM$ given by:
\begin{equation}
\label{fun:Anchor}
\mathcal{B}_q(\alpha_q)(\cdot)=\pi_q(\cdot ,\alpha_q)
\end{equation}
We define the {\em rank} of  $\pi$ at  $q\in M$ to be equal to the rank of $\mathcal{B}_q:T^*_qM\to T_qM$. This is also the rank of the matrix $\pi^{ij}(x)$. 
If $\pi$ is a Poisson bivector and $h\in C^{\infty}(M)$ is a smooth function we define the {\em Hamiltonian vector field} $X_h$ by
$X_h=\mathcal{B}(dh)$. 
\medskip

For a point $x_o\in M$ define the linear subspace:
\begin{equation*}
\Gamma_{x_o}(M)=\{v\in T_{x_o}(M)\, | \, \exists  \, f \in C^{\infty}(M),  \, X_f(x_o)=v\}
\end{equation*}
Note that, $\Gamma_{x_o}(M)=\mathrm{Im}(\mathcal{B}_{x_o})$. The set $\Gamma(M)=\{\Gamma_{x_o}(M)\}$ is a differentiable distribution called the \textit{characteristic distribution} of the Poisson structure. If the rank of $\Gamma(M)$ is constant, we call it a {\it regular distribution}; else, it is called a {\it singular distribution}. 

\begin{theorem}[{\em Symplectic Stratification Theorem} \cite{DZ05}] The induced characteristic distribution $\Gamma(M)$ of the Poisson manifold $(M, \pi)$ is completely integrable, and the Poisson structure induces symplectic structures on the leaves $\Gamma_{x_o}$. This foliation is integrable in the sense of Stefan-Sussman.
\end{theorem}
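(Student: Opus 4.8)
The plan is to exhibit the characteristic distribution as a singular distribution spanned by Hamiltonian vector fields, and then to invoke the Stefan--Sussman integrability criterion, since the rank of $\mathcal{B}_q$ may jump from point to point and the classical Frobenius theorem does not apply. First I would record that, by the identity $\Gamma_{x_o}(M)=\mathrm{Im}(\mathcal{B}_{x_o})$ together with the fact that every covector is locally of the form $dh$, the family $\mathcal{V}=\{X_h : h\in C^{\infty}(M)\}$ of all Hamiltonian vector fields generates the distribution pointwise, i.e. $\Gamma_{x_o}(M)=\mathrm{span}\{X_h(x_o) : h\in C^{\infty}(M)\}$. The leaf through $x_o$ will then be identified with the orbit of $x_o$ under the pseudogroup generated by the flows of these fields.

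Next I would establish the two algebraic facts that make $\mathcal{V}$ an admissible generating family. The first is the bracket relation $[X_f,X_g]=X_{\{f,g\}}$, a direct consequence of the Jacobi identity for $\{\,,\}$ (equivalently of $[\pi,\pi]_{\mathrm{SN}}=0$); thus $\mathcal{V}$ is closed under Lie bracket within the span of its own generators. The second, and the one that drives the argument, is that every Hamiltonian flow is a Poisson automorphism, $\mathcal{L}_{X_h}\pi=0$, which again follows from $[\pi,\pi]_{\mathrm{SN}}=0$ via the graded Jacobi identity of the Schouten--Nijenhuis bracket. Because the flow $\phi_t$ of $X_h$ preserves $\pi$, it preserves $\mathrm{Im}(\mathcal{B})$, so $(\phi_t)_*\Gamma_{x}=\Gamma_{\phi_t(x)}$. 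Invariance of the generated distribution under the flows of its generators is precisely the hypothesis of the Stefan--Sussman theorem, so $\Gamma(M)$ is integrable and its leaves are immersed submanifolds whose tangent spaces recover $\Gamma$.

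It remains to place a symplectic form on each leaf $L$. On $L$ I would define a $2$-form by $\omega(X_f,X_g):=\{f,g\}=\pi(df,dg)$. Well-definedness is the first point: if $X_f(x_o)=0$ then $\mathcal{B}_{x_o}(df)=0$, whence $\{f,g\}(x_o)=\pi(df,dg)(x_o)=0$ for every $g$, so $\omega$ depends only on the values $X_f(x_o),X_g(x_o)\in T_{x_o}L$ and not on the chosen Hamiltonians. Non-degeneracy is the dual statement: if $\{f,g\}(x_o)=0$ for all $g$ then $\mathcal{B}_{x_o}(df)=X_f(x_o)=0$, so $\omega$ is non-degenerate exactly on $\Gamma_{x_o}=T_{x_o}L$. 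Closedness, $d\omega=0$, I would verify from the invariant formula for the exterior derivative: expanding $d\omega(X_f,X_g,X_h)$ and using $[X_f,X_g]=X_{\{f,g\}}$ reduces it to the Jacobiator $\{f,\{g,h\}\}+\{g,\{h,f\}\}+\{h,\{f,g\}\}$, which vanishes. Together these show $(L,\omega)$ is symplectic, completing the stratification.

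The main obstacle is the singular nature of the distribution: because $\mathrm{rank}\,\mathcal{B}_q$ need not be locally constant, one cannot use Frobenius and must instead verify the flow-invariance hypothesis of Stefan--Sussman, which is where the Poisson condition $[\pi,\pi]_{\mathrm{SN}}=0$ is essential, as it is exactly what makes Hamiltonian flows preserve $\pi$. A secondary technical point is checking that the leafwise form $\omega$ is intrinsically well-defined and closed; both rely on the same bracket identities, so once $[X_f,X_g]=X_{\{f,g\}}$ and $\mathcal{L}_{X_h}\pi=0$ are in hand the remaining verifications are routine.
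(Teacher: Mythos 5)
The paper does not prove this statement at all: it is quoted verbatim from Dufour--Zung \cite{DZ05} and used as a black box, so there is no in-paper argument to compare against. Your proposal is a correct, self-contained version of the standard proof, and it is worth noting that it takes the orbit-theoretic route: you generate $\Gamma(M)$ by Hamiltonian vector fields, use $[X_f,X_g]=X_{\{f,g\}}$ and $\mathcal{L}_{X_h}\pi=0$ (both equivalent to $[\pi,\pi]_{\mathrm{SN}}=0$) to verify the flow-invariance hypothesis of Stefan--Sussmann, and then define $\omega(X_f,X_g)=\{f,g\}$ on each orbit, checking well-definedness, non-degeneracy on $\Gamma_{x_o}=T_{x_o}L$, and closedness via the invariant formula for $d\omega$, which collapses to the Jacobiator. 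The proof in the cited reference instead goes through Weinstein's local splitting theorem, which produces Darboux--Weinstein coordinates $(M,\pi)\simeq(S\times N,\omega_{S}\oplus\pi_N)$ near each point and reads off both the integrability and the leafwise symplectic form from the normal form; that route buys stronger local information (a normal form, used elsewhere in linearization questions), while yours is more economical, needing only the Stefan--Sussmann criterion, and makes transparent exactly where the Poisson condition enters. Two small points you gloss over, both routine: Hamiltonian vector fields need not be complete, but Stefan--Sussmann only requires invariance under local flows, and to span $\Gamma_{x_o}$ with globally defined Hamiltonians one multiplies local functions by bump functions; likewise the smoothness of $\omega$ along a leaf should be noted, which follows by choosing local Hamiltonians whose $X_f$ frame the distribution near a point of the leaf. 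With those remarks added, your argument is complete and correct.
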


The set $\Gamma_q$, the symplectic leaf of $M$ through the point $q$, is also the collection of points that may be joined via piecewise smooth integral curves of Hamiltonian vector fields. Write $\omega_{\Gamma_q}$ for the symplectic form on  $\Gamma_q$. Observe that $T_q\Gamma_q$ is exactly the characteristic distribution of $\pi$ through $p$. Therefore, given $u_q, v_q \in T_q\Gamma_q$
there exist $\alpha_q, \beta_q\in T^*_qM$ that under $\mathcal{B}_q$ go to $u_q$ and $v_q$. Using this we can describe $\omega_{\Gamma_q}$:
\begin{equation}
\label{E:Symp-form-gen}
\omega_{\Gamma_q}(q)(u_q, v_q)=\pi_q(\alpha_q, \beta_q)=\langle \alpha_q,  v_q \rangle=-\langle \beta_q,  u_q \rangle.
\end{equation}

As the rank varies, so do the dimensions of the symplectic leaves of the foliation.

\begin{definition}
A Poisson manifold $M$ is said to be complete if every Hamiltonian
vector field on $M$ is complete.
\end{definition}

Notice that $M$ is complete if and only if every symplectic leaf is bounded in the
sense that its closure is compact.

\begin{definition}
\label{D:Casimir}
Let $M$ be a Poisson manifold. A function $h\in C^\infty(M)$ is called a {\em Casimir} if $\mathcal{B}(dh)=0$.
\end{definition}
	
The following was shown in \cite{GSV}:
\begin{theorem}
\label{T:Const-Poisson}
Let $M$ be an orientable $n$-manifold, $N$ an orientable $n-2$ manifold, and $f:M\to N$ a smooth map.
Let $\mu$ and $\Omega$ be orientations of $M$ and $N$ respectively. The bivector $\pi$ on $M$ defined by
\begin{equation}
\label{E:Def-Intrinsic}
\pi(dg, dh)\mu=k\,dg\wedge dh \wedge f^*\Omega
\end{equation}
for any $g, h\in C^{\infty}(M)$,  where $k$ is any non-vanishing function on $M$ is Poisson. Moreover, its symplectic leaves are
\begin{enumerate}
\item[(i)] the 2-dimensional leaves $f^{-1}(s)$ where $s\in N$ is
a regular value of $f$, \item[(ii)] the 2-dimensional leaves
$f^{-1}(s)\setminus \{\mbox{Critical Points of $f$}\}$ where $s\in
N$ is a singular value of $f$. \item[(iii)] the 0-dimensional
leaves corresponding to each critical point.
\end{enumerate}
\end{theorem}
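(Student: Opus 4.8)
The plan is to separate the statement into two independent claims: first, that the prescription \eqref{E:Def-Intrinsic} genuinely defines a bivector $\pi$ and that this $\pi$ is Poisson; second, that its symplectic leaves are the fibres of $f$ with critical points removed as stated. Throughout I would work with the volume form $\mu' := \mu/k$, which is again a volume form since $k$ is nowhere zero; the defining relation then becomes $\pi(dg,dh)\,\mu' = dg\wedge dh\wedge f^*\Omega$, so the conformal factor $k$ is absorbed and the whole $k$-family is seen at once to differ only by the choice of volume form. Writing $\sigma := f^*\Omega\in\Omega^{n-2}(M)$, I would first record that $\sigma$ is closed: since $\Omega$ has top degree on $N$ we have $d\Omega=0$, hence $d\sigma = f^*(d\Omega)=0$. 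Using the nondegeneracy of the wedge pairing $\Lambda^2 T^*_qM\times\Lambda^{n-2}T^*_qM\to\Lambda^n T^*_qM$ together with the pointwise identity $dg\wedge dh\wedge(\iota_\pi\mu')=\pm\,\pi(dg,dh)\,\mu'$, the defining relation is equivalent to the coordinate-free equation $\iota_\pi\mu' = \sigma$, which simultaneously shows that $\pi$ exists and is unique.

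Next I would verify the two structural facts that force $\pi$ to be Poisson. Writing $\Omega=\rho\,dy^1\wedge\cdots\wedge dy^{n-2}$ locally with $\rho>0$, we get $\sigma=(\rho\circ f)\,df^1\wedge\cdots\wedge df^{n-2}$, so each $df^a$ already occurs in $\sigma$; hence $\pi(df^a,dh)\,\mu'=df^a\wedge dh\wedge\sigma=0$ for every $h$, i.e. $\iota_{df^a}\pi=0$. This says the components of $f$ are Casimirs and, in particular, $\pi(df^i,df^j)=0$ for all $i,j$. It follows that $\iota_\pi\sigma=(\rho\circ f)\,\iota_\pi(df^1\wedge\cdots\wedge df^{n-2})=0$, whence $\iota_{\pi\wedge\pi}\mu'=\iota_\pi(\iota_\pi\mu')=\iota_\pi\sigma=0$ and therefore $\pi\wedge\pi=0$ (equivalently, $\pi$ has rank at most $2$ at every point). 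On the other hand, letting $\mathrm{div}_{\mu'}$ be the divergence operator determined by $\iota_{\mathrm{div}_{\mu'}P}\mu'=\pm\,d(\iota_P\mu')$, closedness of $\sigma$ gives $\iota_{\mathrm{div}_{\mu'}\pi}\mu'=\pm\,d\sigma=0$, so $\mathrm{div}_{\mu'}\pi=0$.

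With these two facts in hand, the Jacobi identity is immediate from Koszul's theorem that $\mathrm{div}_{\mu'}$ generates the Schouten--Nijenhuis bracket: for a bivector this reads $[\pi,\pi]_{\mathrm{SN}}=\mathrm{div}_{\mu'}(\pi\wedge\pi)-2\,(\mathrm{div}_{\mu'}\pi)\wedge\pi$, and both terms vanish by the previous paragraph. Hence $[\pi,\pi]_{\mathrm{SN}}=0$ and $\pi$ is Poisson. I expect this verification of the Poisson condition to be the main obstacle, since a generic rank-$\le 2$ bivector need not be Poisson; the payoff of the intrinsic formulation is precisely that it converts the Jacobi identity into the two transparent conditions $\pi\wedge\pi=0$ and $\mathrm{div}_{\mu'}\pi=0$. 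A more hands-on alternative would be to observe that, wherever $df^1,\dots,df^{n-2}$ are independent, $\pi$ is the rank-$2$ bivector whose characteristic distribution is the (involutive) tangent distribution to the fibres of $f$, and then to conclude by continuity; the divergence argument is preferable because it needs no density assumption on the regular locus.

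Finally, for the symplectic leaves I would identify the characteristic distribution $\Gamma_q=\mathrm{Im}\,\mathcal{B}_q$. Since $\iota_{df^a}\pi=0$, every Hamiltonian vector field annihilates each $f^a$, so $\Gamma_q\subseteq\bigcap_a\ker df^a_q$, i.e. $\pi$ is tangent to the fibres of $f$. If $f(q)$ is a regular value, then $df_q$ is surjective, the fibre is a $2$-dimensional submanifold, and choosing $g,h$ whose differentials complete $df^1,\dots,df^{n-2}$ to a coframe shows $\pi(dg,dh)(q)\neq 0$; thus $\Gamma_q=T_q f^{-1}(f(q))$ has rank exactly $2$, giving the $2$-dimensional leaves of (i). At a critical point $p$ of $f$ the form $df^1\wedge\cdots\wedge df^{n-2}$ vanishes, so $\sigma_p=0$ and hence $\pi_p=0$, yielding the $0$-dimensional leaves of (iii). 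For a singular value $s$ the same pointwise dichotomy on $f^{-1}(s)$ shows that $\pi$ has rank $2$ and is tangent to the level set away from its critical points and vanishes at them, so the symplectic leaves are the connected components of $f^{-1}(s)\setminus\{\text{critical points of }f\}$, which is (ii). Invoking the Symplectic Stratification Theorem then packages these integral manifolds as the symplectic leaves, completing the proof.
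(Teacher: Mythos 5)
Your proof is correct, but note that the paper never proves Theorem \ref{T:Const-Poisson} itself: it is quoted from \cite{GSV}, so the comparison is with the argument given there rather than with anything in this text. Your route is genuinely different from that one. The proof in the Flaschka--Ratiu tradition followed in \cite{GSV} is local and elementary: the components $f^1,\dots,f^{n-2}$ of $f$ are Casimirs by construction; near a point where $df^1\wedge\cdots\wedge df^{n-2}\neq 0$ one completes them to a chart, in which the defining formula reduces $\pi$ to $\tilde{k}\,\partial_1\wedge\partial_2$, visibly Poisson; and at points where that wedge vanishes one has $\pi=0$, so $[\pi,\pi]_{\mathrm{SN}}$ also vanishes there, because in coordinates every term of $[\pi,\pi]_{\mathrm{SN}}^{ijk}=2\sum_l\bigl(\pi^{li}\partial_l\pi^{jk}+\pi^{lj}\partial_l\pi^{ki}+\pi^{lk}\partial_l\pi^{ij}\bigr)$ contains an undifferentiated factor of $\pi$. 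This pointwise dichotomy, incidentally, repairs the ``hands-on alternative'' you set aside: no density of the regular locus is needed at all, so your stated reason for preferring the divergence argument is not actually an obstruction to the local one. What your argument buys instead is a clean coordinate-free packaging: absorbing $k$ into the volume form, reading the definition as $\iota_\pi\mu'=\pm f^*\Omega$, and invoking the Koszul generator so that the Jacobi identity becomes exactly the two transparent conditions $\pi\wedge\pi=0$ (rank at most $2$) and $\mathrm{div}_{\mu'}\pi=0$ (closedness of $f^*\Omega$); this handles the singular locus with no case analysis and shows the construction works for any closed decomposable $(n-2)$-form, not just pullbacks $f^*\Omega$. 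Its cost is the reliance on Koszul's identity relating $\mathrm{div}_{\mu'}$ to the Schouten bracket, a heavier tool than the coordinate computation. Your identification of the leaves matches the expected argument (Casimirs force tangency to the fibres, rank exactly $2$ at noncritical points, $\pi=0$ at critical points), with the standard caveat---already implicit in the statement itself---that symplectic leaves are the connected components of the sets listed in (i) and (ii).
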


Equation (\ref{E:Def-Intrinsic}) is known as the Flaschka-Ratiu formula. It provides a way to construct Poisson manifolds with prescribed Casimirs.

\section{Local expressions for the Poisson structures}\label{sec:local}

In this section we give an explicit Poisson local structure in a neighborhood of the singularities of fibrations $F:M^{3}\times S^{1}\to {\bf R}\times S^{1}$, where $F|_{M^3}$ is a Bott-Morse function. It defines a foliation $\mathcal{F}$ with Bott-Morse singularities on $M$. We we will describe the general strategy to find Poisson local bivectors:
\begin{itemize}
    \item[Step 1:] Consider as Casimirs for the Poisson structure that we will find the functions $C_{1}$ and $C_{2}$ that describe locally the singularity of the fibration.
    \item[Step 2:] Calculate the differentials of the functions $C_{1}$ and $C_{2}$. 
    \item[Step 3:] Use the Flaschka-Ratiu formula (\ref{E:Def-Intrinsic}) to calculate the skew-symmetric matrix $\Pi$ with 
    matrix entries $\Pi_{ij} = dx_{i}\wedge dx_{j}\wedge d C_{1}\wedge d C_{2}$. Each $\Pi_{ij}$ is equal to the determinant det$(e_{i},e_{j},d C_{1},d C_{2})$, where $\{e_i\}_{i=1}^4$ is the canonical basis of ${\bf R}^4$, and they are considered as column vectors. \\
\newline 
The bivector $\Pi$ is the matrix of the endomorphism $\mathcal{B}$ associated with the Poisson structure, with $C_{1}$ and $C_{2}$ Casimirs. Then $\Pi$ annihilate the differentials $d C_{1}$ and $d C_{2}$.  

	\item[Step 4:] Write the Poisson bivector using the matrix found in the previous step.
\end{itemize}

\subsection{Local expressions near a Bott-Morse singularity}\label{Poisson}

Let $(p, t)\in M^3\times S^1$, for $p\in N_{j}\subset$ ${\rm Sing}(\mathcal{F})$, since $\Sigma_{p}$ is transversal to $N_{j}$. The following table (\ref{tabletwo}) contains the Casimirs in consideration, according to each component $N_{j}$, arranged according to the dimension of the component, their type, and their Morse index.

\begin{table}[H]
\centering
\begin{tabular}{|c|c|l|}
\hline
\multicolumn{3}{|l|}{$\dim(N_{j})=0$}                                                                     \\ \hline
Type                    & Morse Index        & \multicolumn{1}{c|}{Casimirs}                              \\ \hline
\multirow{4}{*}{Center} & \multirow{2}{*}{0} & $C_1(x_{1}, x_{2},x_{3},t) = x_{1}^{2}+x_{2}^{2}+x_{3}^{2}$  \\
                        &                    & $C_2(x_{1}, x_{2},x_{3},t) = t$                              \\ \cline{2-3} 
                        & \multirow{2}{*}{3} & $C_1(x_{1}, x_{2},x_{3},t) = -x_{1}^{2}-x_{2}^{2}-x_{3}^{2}$ \\
                        &                    & $C_2(x_{1}, x_{2},x_{3},t) = t$                              \\ \hline
\multirow{4}{*}{Saddle} & \multirow{2}{*}{1} & $C_1(x_{1}, x_{2},x_{3},t) = -x_{1}^{2}+x_{2}^{2}+x_{3}^{2}$ \\
                        &                    & $C_2(x_{1}, x_{2},x_{3},t) = t$                              \\ \cline{2-3} 
                        & \multirow{2}{*}{2} & $C_1(x_{1}, x_{2},x_{3},t) = -x_{1}^{2}-x_{2}^{2}+x_{3}^{2}$ \\
                        &                    & $C_2(x_{1}, x_{2},x_{3},t) = t$                              \\ \hline
\multicolumn{3}{|l|}{$\dim(N_{j})=1$}                                                                     \\ \hline
\multirow{4}{*}{Center} & \multirow{2}{*}{0} & $C_1(x_{1}, x_{2},x_{3},t) = x_{1}^{2}+x_{2}^{2}$            \\
                        &                    & $C_2(x_{1}, x_{2},x_{3},t) = t$                              \\ \cline{2-3} 
                        & \multirow{2}{*}{2} & $C_1(x_{1}, x_{2},x_{3},t) = -x_{1}^{2}-x_{2}^{2}$           \\
                        &                    & $C_2(x_{1}, x_{2},x_{3},t) = t$                              \\ \hline
\multirow{2}{*}{Saddle} & \multirow{2}{*}{1} & $C_1(x_{1}, x_{2},x_{3},t) = -x_{1}^{2}+x_{2}^{2}$           \\
                        &                    & $C_2(x_{1}, x_{2},x_{3},t) = t$                              \\ \hline
\end{tabular}
\vspace{2mm}
\caption{Casimirs for each $N_{j}$.}
\label{tabletwo}
\end{table}

\noindent Then the corresponding differentials $dC_{1}$ and $dC_{2}$, for each case are found in the following table (\ref{tablethree}):

\begin{table}[H]
\centering
\begin{tabular}{|c|c|l|}
\hline
\multicolumn{3}{|l|}{$\dim(N_{j})=0$}                                                                   \\ \hline
Type                    & Morse Index        & \multicolumn{1}{c|}{Differentials}                       \\ \hline
\multirow{4}{*}{Center} & \multirow{2}{*}{0} & $dC_1(x_{1}, x_{2},x_{3},t) = (2x_{1},2x_{2},2x_{3},0)$    \\
                        &                    & $dC_2(x_{1}, x_{2},x_{3},t) = (0,0,0,1)$                   \\ \cline{2-3} 
                        & \multirow{2}{*}{3} & $dC_1(x_{1}, x_{2},x_{3},t) = (-2x_{1},-2x_{2},-2x_{3},0)$ \\
                        &                    & $dC_2(x_{1}, x_{2},x_{3},t) = (0,0,0,1)$                   \\ \hline
\multirow{4}{*}{Saddle} & \multirow{2}{*}{1} & $dC_1(x_{1}, x_{2},x_{3},t) = (-2x_{1},2x_{2},2x_{3},0)$   \\
                        &                    & $dC_2(x_{1}, x_{2},x_{3},t) = (0,0,0,1)$                   \\ \cline{2-3} 
                        & \multirow{2}{*}{2} & $dC_1(x_{1}, x_{2},x_{3},t) = (-2x_{1},-2x_{2},2x_{3},0)$  \\
                        &                    & $dC_2(x_{1}, x_{2},x_{3},t) = (0,0,0,1)$                   \\ \hline
\multicolumn{3}{|l|}{$\dim(N_{j})=1$}                                                                   \\ \hline
\multirow{4}{*}{Center} & \multirow{2}{*}{0} & $dC_1(x_{1}, x_{2},x_{3},t) = (2x_{1},2x_{2},0,0)$         \\
                        &                    & $dC_2(x_{1}, x_{2},x_{3},t) = (0,0,0,1)$                   \\ \cline{2-3} 
                        & \multirow{2}{*}{2} & $dC_1(x_{1}, x_{2},x_{3},t) = (-2x_{1},-2x_{2},0,0)$       \\
                        &                    & $dC_2(x_{1}, x_{2},x_{3},t) = (0,0,0,1)$                   \\ \hline
\multirow{2}{*}{Saddle} & \multirow{2}{*}{1} & $dC_1(x_{1}, x_{2},x_{3},t) = (-2x_{1},2x_{2},0,0)$        \\
                        &                    & $dC_2(x_{1}, x_{2},x_{3},t) = (0,0,0,1)$                   \\ \hline
\end{tabular}
\vspace{2mm}
\caption{Differentials of the Casimirs considered in table \ref{tabletwo}.}
\label{tablethree}
\end{table}

\noindent Each bivector has rank 2 and annihilates $dC_{1}$ and $dC_{2}$.  For simplicity we reduce our notation of bivector fields by $\partial_{ij}:=\partial_i\wedge\partial_j$ for $i<j$. The corresponding bivectors are given by the expressions in table (\ref{tablefour}):

\begin{table}[H]
\centering
\begin{tabular}{|c|c|r|}
\hline
\multicolumn{3}{|l|}{$\dim(N_{j})=0$}                                                                                                                    \\ \hline
Type                    & Morse Index        & \multicolumn{1}{c|}{Bivector}                                                                             \\ \hline
\multirow{4}{*}{Center} & \multirow{2}{*}{0} & \multirow{4}{*}{$\pi= k \left(x_{3} \partial_{12} - x_{2}\partial_{13} + x_{1}\partial_{23}\right) \quad (1)$} \\
                        &                    &                                                                                                           \\ \cline{2-2}
                        & \multirow{2}{*}{3} &                                                                                                           \\
                        &                    &                                                                                                           \\ \hline
\multirow{4}{*}{Saddle} & \multirow{2}{*}{1} & \multirow{2}{*}{$\pi= k \left(-x_{3} \partial_{12} +x_{2}\partial_{13} + x_{1}\partial_{23}\right) \quad (2)$} \\
                        &                    &                                                                                                           \\ \cline{2-3} 
                        & \multirow{2}{*}{2} & \multirow{2}{*}{$\pi= k \left(-x_{3} \partial_{12} +x_{2}\partial_{13} + x_{1}\partial_{23}\right) \quad (3)$} \\
                        &                    &                                                                                                           \\ \hline
\multicolumn{3}{|l|}{$\dim(N_{j})=1$}                                                                                                                    \\ \hline
\multirow{4}{*}{Center} & \multirow{2}{*}{0} & \multirow{4}{*}{$\pi= k \left(- x_{2}\partial_{13} + x_{1}\partial_{23}\right) \quad (4)$}                     \\
                        &                    &                                                                                                           \\ \cline{2-2}
                        & \multirow{2}{*}{2} &                                                                                                           \\
                        &                    &                                                                                                           \\ \hline
\multirow{2}{*}{Saddle} & \multirow{2}{*}{1} & \multirow{2}{*}{$\pi= k \left( x_{2}\partial_{13} + x_{1}\partial_{23}\right) \quad (5)$}                      \\
                        &                    &                                                                                                           \\ \hline
\end{tabular}
\vspace{2mm}
\caption{Bivector $\pi$ associated with the matrix $\Pi$.}
\label{tablefour}
\end{table}

\noindent Here $k=k(x_{1},x_{2},x_{3},t)$ is a nonzero smooth function on $M\times S^{1}$.\\

If $\pi$ presents one of the forms as in the Table \ref{tablefour}, then the above tensor can be interpreted as multiple of a linear Poisson structure in ${\bf R}^{3}$. Hence, up to the factor $k$, it is dual to the Lie algebra structure of real dimension three possessing commutation relations between the basis elements $e_{1}$, $e_{2}$ and $e_{3}$ that we will show below.

\begin{enumerate}
\item If $\pi$ is of the form $(1)$, then  
       \[
            \begin{array}{ccc}
                [e_{1},e_{2}]=e_{3}, & [e_{1},e_{3}]=-e_{2}, & [e_{2},e_{3}]=e_{1}. 
            \end{array}
        \]
         This Lie algebra is isomorphic to $\mathfrak{so}_{3}({\bf R}).$
\item If $\pi$ is of the form $(2)$, then 
      \[
            \begin{array}{ccc}
                [e_{1},e_{2}]=-e_{3}, & [e_{1},e_{3}]=e_{2}, & [e_{2},e_{3}]=e_{1}.
            \end{array}
        \]
        This Lie algebra is isomorphic to $\mathfrak{sl}_{2}({\bf R}).$
\item If $\pi$ is of the form $(3)$, then 
        \[
            \begin{array}{ccc}
                [e_{1},e_{2}]=0, & [e_{1},e_{3}]=-e_{2}, & [e_{2},e_{3}]=e_{1}. 
            \end{array}
        \]
        This Lie algebra is isomorphic to $\mathfrak{e}(2).$ 
\end{enumerate}
\begin{remark}
The Poisson structure constructed on $M^3\times S^1$ does not depend on $t\in S^1$. Then, the Poisson structure on $M^3$ is just the restriction of $\Pi$ to $M^3$. Notice that each and every one of the Poisson structures we found depend on a smooth non-vanishing function $k$. That is, we actually found a family of Poisson structures that changes with $k$. 
\end{remark}

\section{Symplectic forms on the leaves near singularities}\label{S5}

In the next section we describe the leaves of the characteristic distribution of the Poisson structures found in the previous Section \ref{Poisson}. We will present the symplectic forms for each component of the singularity set of a Bott-Morse foliation. First, let us explain the general procedure that we will follow. \\ 

\begin{itemize}
    \item[{\bf Step 1;}] Obtain the tangent vectors $u_{q}$ and $v_{q}$ to the symplectic leaf $\Gamma_{q}$ at $q\in M$ by computing the null space of the differentials $dC_{1}$ and $dC_{2}$.
    \item[{\bf Step 2;}] Use the local expressions of the Poisson bivectors, so one can find $\alpha_{q}$ such that $\mathcal{B}_{q}(\alpha_{q})=u_{q}$, similarly find  $\beta_{q}$ such that $\mathcal{B}_{q}(\beta_{q})=v_{q}$, for the bundle map (\ref{fun:Anchor}.)
    \item[{\bf Step 3;}] Calculate the symplectic form using equation (\ref{E:Symp-form-gen}):
    \[\omega_{\Gamma_{q}}(q)(u_{q},v_{q})=\langle \alpha_{q}, v_{q}\rangle=-\langle \beta_{q}, u_{q}\rangle\] 
\end{itemize}
\begin{propo}
Let $q=(x_{1},x_{2},x_{3},t)\in B^{3}\times S^{1}$ and $\pi$ one of the bivectors of the Table \ref{tablefour}. The symplectic form induced by $\pi$ on the symplectic leaf $\Gamma_{p}$ through at the point $q$ is given by: 
\begin{equation}
    \frac{x_{1}}{k(x_{1},x_{2},x_{3},t)\sqrt{x_{1}^{2}+x_{2}^{2}}}\omega_{area}(q)
\end{equation}
Here $\omega_{area}$ is the area form on $\Gamma_{p}$ induced by the euclidean metric on $B^{3}\times S^{1}$. 
\end{propo}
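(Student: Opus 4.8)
The plan is to follow the three-step procedure laid out immediately before the Proposition, applying the explicit description of symplectic forms on the leaves of a Poisson structure given by equation (\ref{E:Symp-form-gen}). Since the Poisson structure on $M^3\times S^1$ does not depend on $t$ and the factor $k$ is common to all cases, I expect the computation to reduce to a single model calculation that transfers to each row of Table \ref{tablefour} up to signs. The key observation is that all the Casimirs share $C_2 = t$, so $dC_2 = (0,0,0,1)$ forces the symplectic leaves to lie in slices $\{t = \mathrm{const}\}$, and the tangent space $T_q\Gamma_q$ is the common kernel of $dC_1$ and $dC_2$ inside $T_q({\bf R}^3\times S^1)$.

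First I would carry out \textbf{Step 1}: compute a basis $\{u_q, v_q\}$ for $\ker dC_1 \cap \ker dC_2$. Because $dC_2=(0,0,0,1)$, any tangent vector to the leaf has vanishing $\partial_t$-component, so I work in the $(x_1,x_2,x_3)$-space and take the orthogonal complement of the gradient of $C_1$. For a center component of dimension $0$ (bivector $(1)$), $dC_1 = (2x_1,2x_2,2x_3,0)$, and I can write down two explicit vectors spanning its kernel, for instance $u_q = (-x_2, x_1, 0, 0)$ and a second vector $v_q$ orthogonal to both $u_q$ and $dC_1$, obtained as a cross product $dC_1 \times u_q$ normalized appropriately. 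Next, \textbf{Step 2}: using the matrix $\mathcal{B}$ of the chosen bivector, I solve $\mathcal{B}_q(\alpha_q)=u_q$ and $\mathcal{B}_q(\beta_q)=v_q$ for covectors $\alpha_q,\beta_q$; since $\mathcal{B}$ has rank $2$ with kernel spanned by $dC_1,dC_2$, these $\alpha_q,\beta_q$ are determined only up to addition of Casimir differentials, but the pairings $\langle \alpha_q, v_q\rangle$ and $\langle \beta_q, u_q\rangle$ in equation (\ref{E:Symp-form-gen}) are insensitive to that ambiguity because $v_q,u_q$ annihilate $dC_1,dC_2$. Finally, \textbf{Step 3}: I evaluate $\omega_{\Gamma_q}(u_q,v_q)=\langle \alpha_q, v_q\rangle$, compare it to the Euclidean area form $\omega_{area}(q)$ evaluated on the same pair $(u_q,v_q)$, and read off the conformal factor. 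The factor $\sqrt{x_1^2+x_2^2}$ in the denominator should emerge from the normalization needed to make $v_q$ a genuine orthonormal-type partner to $u_q$ in the metric, while the numerator $x_1$ and the factor $k^{-1}$ come directly from the structure of $\mathcal{B}$ and the defining relation (\ref{E:Def-Intrinsic}).

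The main obstacle I anticipate is bookkeeping the normalizations correctly so that the ratio of $\omega_{\Gamma_q}$ to $\omega_{area}$ is computed consistently: the area form $\omega_{area}$ measures the Euclidean area spanned by $u_q$ and $v_q$, so to extract a well-defined scalar conformal factor I must evaluate both two-forms on the \emph{same} pair of tangent vectors and divide, rather than comparing their values on differently-normalized bases. Inverting $\mathcal{B}_q$ on the image requires care because $\mathcal{B}_q$ is not invertible as a map $T_q^*M\to T_qM$; I would resolve this by restricting to the $2$-dimensional image (the leaf tangent space) where $\mathcal{B}_q$ is an isomorphism onto its image from the complement of its kernel, or equivalently by solving the linear system directly and using the pairing's invariance noted above. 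Once the model case is settled, the remaining rows of Table \ref{tablefour} differ only by the signs of the coefficients of $\partial_{12},\partial_{13},\partial_{23}$ and by which quadratic form defines $C_1$; I would verify that these sign changes leave the final expression $\frac{x_1}{k\sqrt{x_1^2+x_2^2}}\omega_{area}$ invariant, or explain any uniform sign convention absorbed into the orientation, thereby establishing the single formula for all components at once.
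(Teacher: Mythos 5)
Your plan is essentially the paper's own plan: the paper's proof consists of exactly your Steps 1--3, with a table recording the tangent vectors $u_q, v_q$ (chosen orthogonal, with $u_q$ of unit length) and the inversion of $\mathcal{B}_q$ left as ``straightforward to check.'' The problem is that the extra care you rightly impose in Step 3 is incompatible with the formula you are trying to prove. Take the $\dim(N_{j})=1$ center, bivector $(4)$, $\pi = k\left(-x_{2}\partial_{13}+x_{1}\partial_{23}\right)$, with $C_{1}=x_{1}^{2}+x_{2}^{2}$, $C_{2}=t$. The leaf frame is $u_{q}=\frac{1}{\sqrt{x_{1}^{2}+x_{2}^{2}}}\left(-x_{2}\partial_{1}+x_{1}\partial_{2}\right)$ and $v_{q}=x_{1}\partial_{3}$; solving $\mathcal{B}_{q}(\alpha_{q})=u_{q}$ gives $\alpha_{q}=\frac{1}{k\sqrt{x_{1}^{2}+x_{2}^{2}}}dx_{3}$ (up to Casimir differentials), hence
\begin{equation*}
\omega_{\Gamma_{q}}(u_{q},v_{q})=\langle \alpha_{q},v_{q}\rangle=\frac{x_{1}}{k\sqrt{x_{1}^{2}+x_{2}^{2}}},
\end{equation*}
which is where the stated coefficient comes from. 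But $v_{q}$ is orthogonal to $u_{q}$ and \emph{not} unit: $|v_{q}|=|x_{1}|$, so $\omega_{area}(u_{q},v_{q})=\pm|x_{1}|$, and the frame-independent ratio of the two $2$-forms is
\begin{equation*}
\frac{\omega_{\Gamma_{q}}(u_{q},v_{q})}{\omega_{area}(u_{q},v_{q})}=\frac{\pm 1}{k\sqrt{x_{1}^{2}+x_{2}^{2}}},
\end{equation*}
with no $x_{1}$ in the numerator. The same computation in the $\dim(N_{j})=0$ cases gives $\pm 1/\bigl(k\sqrt{x_{1}^{2}+x_{2}^{2}+x_{3}^{2}}\bigr)$, consistent with the classical fact that the leaves of the $\mathfrak{so}_{3}({\bf R})$ Lie--Poisson structure are round spheres carrying $\frac{1}{r}\omega_{area}$. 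So if you execute your Step 3 as announced --- evaluate both forms on the \emph{same} pair and divide --- you will not land on $\frac{x_{1}}{k\sqrt{x_{1}^{2}+x_{2}^{2}}}\omega_{area}$. That expression is the evaluation $\omega_{\Gamma_{q}}(u_{q},v_{q})$ on the particular orthogonal-but-not-orthonormal frame of the paper's table, read off as though that frame were orthonormal, which is what the paper's proof implicitly does; your own prediction that ``the numerator $x_{1}$ comes from $\mathcal{B}$'' while ``$\sqrt{x_{1}^{2}+x_{2}^{2}}$ comes from normalizing $v_{q}$'' cannot both hold, since normalizing $v_{q}$ cancels exactly that $x_{1}$.

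A quick sanity check shows your normalization discipline is the correct one and the literal formula cannot hold: on the cylinder leaf $x_{1}^{2}+x_{2}^{2}=c>0$ there are points with $x_{1}=0$, where the claimed form $\frac{x_{1}}{k\sqrt{x_{1}^{2}+x_{2}^{2}}}\omega_{area}$ vanishes identically, yet there $\pi=-kx_{2}\partial_{13}\neq 0$ still has rank $2$, so the leafwise symplectic form must be nondegenerate (what degenerates is the frame itself, since $v_{q}=x_{1}\partial_{3}\to 0$). In short, your method is sound and matches the paper's, but carried out faithfully it proves a corrected version of the Proposition (conformal factor $\pm 1/(k\sqrt{x_{1}^{2}+x_{2}^{2}})$ or $\pm 1/(k\sqrt{x_{1}^{2}+x_{2}^{2}+x_{3}^{2}})$ depending on $\dim(N_{j})$) rather than the statement as written; to reproduce the stated formula you would have to abandon the requirement of comparing the two forms on a common normalized basis, which is precisely the step you flagged as essential.
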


\begin{proof}
First assume $x_{1}^{2}+x_{2}^{2}\neq 0$ and recall that the Casimirs for $\pi$ depending on each case are shown in the table (\ref{tabletwo}).\\

\noindent The following table (\ref{tablefive}) contains the vectors $u_q, v_q$ tangent to each fiber $\Gamma_q$, for the different components:
\begin{table}[H]
\centering
\begin{tabular}{|c|c|c|c|}
\hline
\multicolumn{4}{|l|}{$\dim(N_{j})=0$}                                                                                                                                                                                                                                                                                  \\ \hline
Type                    & Morse Index        & $u_{q}$                                                                                        & $v_{p}$                                                                                                                                                                \\ \hline
\multirow{4}{*}{Center} & \multirow{2}{*}{0} & \multirow{4}{*}{$\frac{1}{\sqrt{x_{1}^{2}+x_{2}^{2}}}(-x_{2}\partial_{1}+x_{1}\partial_{2})$}  & \multirow{4}{*}{$\frac{-1}{x_{1}^{2}+x_{2}^{2}}(x_{1}^{2}x_{3}\partial_{1}+x_{1}x_{2}x_{3}\partial_{2})+x_{1}\partial_{3}$}                                            \\
                        &                    &                                                                                                &                                                                                                                                                                        \\ \cline{2-2}
                        & \multirow{2}{*}{3} &                                                                                                &                                                                                                                                                                        \\
                        &                    &                                                                                                &                                                                                                                                                                        \\ \hline
\multirow{4}{*}{Saddle} & \multirow{2}{*}{1} & \multirow{2}{*}{$\frac{1}{\sqrt{x_{1}^{2}+x_{2}^{2}}}(x_{2}\partial_{1}+x_{1}\partial_{2})$}   & \multirow{2}{*}{\begin{tabular}[c]{@{}c@{}}$\frac{-1}{x_{1}^{2}+x_{2}^{2}} (x_{1}^{2}x_{3}\partial_{1}-x_{1}x_{2}x_{3}\partial_{2})+x_{1}\partial_{3}$\end{tabular}} \\
                        &                    &                                                                                                &                                                                                                                                                                        \\ \cline{2-4} 
                        & \multirow{2}{*}{2} & \multirow{2}{*}{$\frac{1}{\sqrt{x_{1}^{2}+x_{2}^{2}}}(-x_{2}\partial_{1}+x_{1}\partial_{2})$}  & \multirow{2}{*}{$\frac{-1}{x_{1}^{2}+x_{2}^{2}}(x_{1}^{2}x_{3}\partial_{1}+x_{1}x_{2}x_{3}\partial_{2})+x_{1}\partial_{3}$}                                            \\
                        &                    &                                                                                                &                                                                                                                                                                        \\ \hline
\multicolumn{4}{|l|}{$\dim(N_{j})=1$}                                                                                                                                                                                                                                                                                  \\ \hline
\multirow{4}{*}{Center} & \multirow{2}{*}{0} & \multirow{4}{*}{$\frac{1}{\sqrt{x_{1}^{2}+x_{2}^{2}}}(-x_{2}\partial_{1}+x_{1}\partial_{2})$}  & \multirow{6}{*}{$x_{1}\partial_{3}$}                                                                                                                                   \\
                        &                    &                                                                                                &                                                                                                                                                                        \\ \cline{2-2}
                        & \multirow{2}{*}{2} &                                                                                                &                                                                                                                                                                        \\
                        &                    &                                                                                                &                                                                                                                                                                        \\ \cline{1-3}
\multirow{2}{*}{Saddle} & \multirow{2}{*}{1} & \multirow{2}{*}{$\frac{1}{\sqrt{x_{1}^{2}+x_{2}^{2}}}(x_{2}\partial_{1}+x_{1}\partial_{2})$} &                                                                                                                                                                        \\
                        &                    &                                                                                                &                                                                                                                                                                        \\ \hline
\end{tabular}
\vspace{2mm}
\caption{Tangent vectors to the fibers.}
\label{tablefive}
\end{table}

\noindent Notice that they are annihilated by $dC_{1}(q)$ and $dC_{2}(q)$. Moreover, we have chosen them to be orthogonal with respect to the euclidean 
metric $dx_{1}^{2}+dx_{2}^{2}+dx_{3}^{3}+dt^{2}$.\\

\noindent For each case, using the local expression of $\pi$, it is straightforward to check that $\mathcal{B}_{q}(\alpha_{q})=u_{q}$, for $\alpha_q$. \end{proof}  

\section{Global Poisson structure}\label{S6}

We will now extend the local expressions of the Poisson bivectors defined on the neighborhood of the singularity set to a global Poisson structure $\Pi$ on $X=M^3\times S^1$, whose symplectic foliation is related to the fibration $F\colon X \to {\bf R}\times S^1$. Recall that $F|_{M^3}$ is given by a Bott-Morse function $f\colon M^3 \to {\bf R}$. The rank of $\Pi$ is 2 everywhere on $X$ except at the singularities of $f$, where the rank drops down to zero. The regular fibers of $F$ are 2-dimensional symplectic leaves of $\Pi$.  If $p\in X$ is a critical point of $f$ contained in the singular fiber $F_p$, then $F_p\setminus \{p\}$ is a 2-dimensional symplectic leaf of $\Pi$.  The following construction of $\Pi$ completes the proof of Theorem \ref{thm:Bott-Morse}.
\newline

The idea of the construction is to use the local models of the Poisson structures around the different singularities described in Section \ref{sec:local} as the building blocks for $\Pi$. These bivectors together with a regular Poisson structure coming from the area forms on the $2$--dimensional leaves of the regular part of $\mathcal{F}$ will endow $X$ with a global Poisson structure. We will need to do a smooth interpolation between the singular and the regular Poisson structures. For this we will use the following lemmata (2.8 and 2.9 in \cite{GSV}).

\begin{lemma}
\label{L:1}
Let $(M,\pi)$ be a regular rank 2 Poisson manifold  and $g\in C^\infty(M)$ be 
any non-vanishing function. Then $(M,g\pi)$ is a regular rank 2 Poisson manifold and the leaves of its symplectic foliation coincide with the leaves of the symplectic foliation of $(M,\pi)$.
\end{lemma}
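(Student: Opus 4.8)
The plan is to separate the statement into three claims: that $g\pi$ has constant rank $2$, that its characteristic distribution agrees with that of $\pi$, and that $g\pi$ is genuinely Poisson. The first two are immediate. Since $g$ is non-vanishing, at every point $q\in M$ the matrix of $(g\pi)_q = g(q)\,\pi_q$ is a nonzero scalar multiple of that of $\pi_q$, so it has the same rank $2$; hence $g\pi$ is again regular of rank $2$. For the distributions, the anchor of $g\pi$ is $\mathcal{B}_{g\pi}=g\,\mathcal{B}_\pi$, and because $g$ never vanishes $\mathrm{Im}(\mathcal{B}_{g\pi,q})=\mathrm{Im}(\mathcal{B}_{\pi,q})$ for all $q$. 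Thus the characteristic distributions coincide pointwise, and by the Symplectic Stratification Theorem the two symplectic foliations have identical leaves. The real content is therefore the Jacobi identity $[g\pi,g\pi]_{\mathrm{SN}}=0$.

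To establish this I would expand the self-bracket using the graded Leibniz rule for the Schouten-Nijenhuis bracket, writing $g\pi=g\wedge\pi$ and treating $g$ as a degree-zero multivector. Two applications of the Leibniz rule, together with graded skew-symmetry and the identity $[\pi,g]_{\mathrm{SN}}=X_g$ (the sign convention here is immaterial for what follows), reduce the computation to
\begin{equation*}
[g\pi,g\pi]_{\mathrm{SN}} = g^{2}\,[\pi,\pi]_{\mathrm{SN}} + 2g\,(X_g\wedge\pi).
\end{equation*}
Since $\pi$ is Poisson the first term vanishes, leaving $[g\pi,g\pi]_{\mathrm{SN}}=2g\,(X_g\wedge\pi)$. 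Everything now hinges on showing that the obstruction term $X_g\wedge\pi$ is identically zero.

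This is exactly where the rank $2$ hypothesis enters, and it is the step I expect to require the most care. A bivector of rank $2$ is decomposable: locally one may write $\pi=u\wedge v$, where $u,v$ span the two-dimensional characteristic distribution $\Gamma(M)=\mathrm{Im}(\mathcal{B})$. The Hamiltonian field $X_g=\mathcal{B}(dg)$ lies in this image, hence $X_g\in\mathrm{span}(u,v)$ at each point. Wedging a vector of $\mathrm{span}(u,v)$ with $u\wedge v$ yields zero, so $X_g\wedge\pi=X_g\wedge u\wedge v=0$ identically. Consequently $[g\pi,g\pi]_{\mathrm{SN}}=0$ and $g\pi$ is Poisson, which together with the first paragraph completes the proof. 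I would emphasise that decomposability, and with it the vanishing of $X_g\wedge\pi$, genuinely uses $\mathrm{rank}\,\pi=2$: for higher rank Poisson structures the term $X_g\wedge\pi$ need not vanish and $g\pi$ fails in general to be Poisson, so this hypothesis is essential and cannot be dropped.
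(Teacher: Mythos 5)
Your proof is correct. Note, though, that the paper does not actually prove this lemma --- it is imported from \cite{GSV} (Lemma 2.8 there) --- and your argument is essentially the standard one behind that citation: the rank and leaf statements follow from $\mathcal{B}_{g\pi}=g\,\mathcal{B}_{\pi}$ with $g$ non-vanishing, and the Jacobi identity follows from the Schouten--Leibniz expansion $[g\pi,g\pi]_{\mathrm{SN}}=g^{2}[\pi,\pi]_{\mathrm{SN}}+2g\,X_{g}\wedge\pi$ together with the vanishing of $X_{g}\wedge\pi$, which holds exactly as you say because a rank-$2$ bivector is pointwise decomposable, $\pi=u\wedge v$, and $X_{g}=\mathcal{B}(dg)$ lies in $\mathrm{span}(u,v)$.
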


\begin{lemma}
\label{L:2}
Let $\pi_1$ and $\pi_2$ be bivectors that define regular rank 2 Poisson structures on the manifold $M$.  Assume that the symplectic
foliations of $\pi_1$ and $\pi_2$ coincide. Then there
exists a nonvanishing function $g\in C^\infty(M)$ such that $\pi_1=g\pi_2$.
\end{lemma}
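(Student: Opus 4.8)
The plan is to reduce the statement to a pointwise fact of linear algebra and then upgrade it to a smooth global identity by a local framing argument. First I would record that, since both $\pi_1$ and $\pi_2$ are regular of rank $2$, their characteristic distributions (the images of the bundle maps $\mathcal{B}^{(1)}$ and $\mathcal{B}^{(2)}$ from (\ref{fun:Anchor})) are smooth rank-$2$ subbundles of $TM$, and by the Symplectic Stratification Theorem their integral leaves are exactly the respective symplectic leaves. Because the symplectic foliations of $\pi_1$ and $\pi_2$ are assumed to coincide, their tangent distributions agree: at every $q\in M$ we have $\mathrm{Im}(\mathcal{B}^{(1)}_q)=\mathrm{Im}(\mathcal{B}^{(2)}_q)=:\Delta_q$, a fixed $2$-plane in $T_qM$ (using the observation in \S\ref{S3} that $T_q\Gamma_q$ is precisely the characteristic distribution of the corresponding bivector at $q$).

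The key pointwise observation is that a bivector of rank exactly $2$ is decomposable, and the plane spanned by its factors is precisely its image plane. Thus $\pi_1(q)$ and $\pi_2(q)$ are both nonzero elements of $\Lambda^2\Delta_q$. Since $\dim\Delta_q=2$, the exterior square $\Lambda^2\Delta_q$ is one-dimensional, so $\pi_1(q)$ and $\pi_2(q)$ are forced to be proportional. This defines a function $g\colon M\to{\bf R}$ by $\pi_1(q)=g(q)\,\pi_2(q)$, and $g(q)\neq 0$ for every $q$ because $\pi_1(q)\neq 0$ (rank $2$ everywhere). At this stage $g$ is a well-defined nonvanishing function, but smoothness has only been verified pointwise.

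To establish smoothness I would work in a neighborhood $U$ of an arbitrary point on which the rank-$2$ distribution $\Delta$ admits a smooth frame $X,Y$; such a frame exists locally since $\Delta$ is a smooth subbundle of $TM$. Then $X\wedge Y$ is a nowhere-zero smooth section of the line bundle $\Lambda^2\Delta|_U$, and I would write $\pi_1=h_1\,X\wedge Y$ and $\pi_2=h_2\,X\wedge Y$ with $h_1,h_2\in C^\infty(U)$; both functions are smooth because $\pi_1$ and $\pi_2$ are smooth sections of $\Lambda^2 TM$ taking values in the line $\Lambda^2\Delta$, and $h_2$ is nowhere zero since $\pi_2$ has rank $2$. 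Hence $g|_U=h_1/h_2$ is smooth and nonvanishing on $U$. As the pointwise definition of $g$ does not depend on the chosen frame, these local expressions agree on overlaps and glue to a global smooth nonvanishing $g$ satisfying $\pi_1=g\,\pi_2$, as required.

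The only delicate point is the passage from pointwise proportionality to genuine smoothness of $g$; the local-frame argument above is the natural way to handle it, and its validity rests on $\Delta$ being a smooth subbundle. It is worth emphasizing that the Jacobi (Poisson) condition is not used beyond guaranteeing, via the coincidence of the two symplectic foliations, that the two characteristic distributions agree — the remaining content is purely the linear algebra of rank-$2$ bivectors together with smoothness of $\Delta$.
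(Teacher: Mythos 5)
Your proof is correct, and it is essentially the argument behind this lemma: the paper itself gives no proof (it imports the statement verbatim as Lemma 2.9 of \cite{GSV}), and the proof there rests on the same observation you make, namely that two regular rank-$2$ bivectors with a common characteristic distribution $\Delta$ are both nonvanishing sections of the line bundle $\Lambda^2\Delta$, hence differ by a smooth nonvanishing factor. The only cosmetic difference is that \cite{GSV} verifies smoothness of the ratio in local coordinates adapted to the foliation (the Darboux-type normal form of a regular rank-$2$ Poisson structure), whereas you use an arbitrary local frame of $\Delta$; both treatments handle the one delicate point --- smoothness of $g$, as opposed to mere pointwise proportionality --- correctly.
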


\begin{remark}\label{P:pi_F}
Let  $F\colon X\to {\bf R}\times S^1$ be a Bott-Morse foliation with singular set $\mathcal{S}$ as described above. There exists an open set  $W\subset X$   that does not contain any critical points of $F$, and a regular rank 2 Poisson structure $\pi_F$ defined on $W$ such that  the symplectic leaves of $\pi_F$ coincide with the intersection of the fibers of $f$ with $W$. Moreover, the region $W$ satisfies 
$$ X=W\cup U_{\mathcal{S}}.$$
Here $U_{\mathcal{S}}$ is the tubular neighborhood of the singularity set. 
\end{remark}

Recall that the Poisson structures of table (\ref{tablefour}), were defined in the neighborhoods of the singularities. The elements of the singularity set can be assumed to be disjoint. To ease the gluing construction we denote by $\pi_{\mathcal{S}}$ the Poisson bivector defined on the neighborhoods of the components of the singularity set $\mathcal{S}$. That is, $\pi_{\mathcal{S}} \in \Gamma(\Lambda^2 TX)$ is locally defined by one of the seven local expressions described above and is zero everywhere else. The definition of the open subset $W\subset X$ in Remark \ref{P:pi_F} is in terms of the open sets $V_{\mathcal{S}}$ satisfying
$$C\subset V_{\mathcal{S}} \subset U_{\mathcal{S}}\quad\hbox{with}\quad C \in \mathcal{S}.$$
We define
$$\Pi(p)=\begin{cases} \pi_F(p) \qquad \mbox{if} \qquad p\in  W\setminus U_{\mathcal{S}},\\
 \pi_{\mathcal{S}}(p)\qquad \mbox{if} \qquad p\in \overline{V_{\mathcal{S}}} . \end{cases} $$%
This defines $\Pi$ on the complement of the set $W \cap U_{\mathcal{S}}$.  The set $U_{\mathcal{S}}$ is composed of the collection of open sets defined around each of the seven types of singularities. That is, $U_{\mathcal{S}} = U_{\mathcal{S}_1} \cup \dots \cup U_{\mathcal{S}_7}$. Hence, we have that $W \cap U_{\mathcal{S}} = (W \cap U_{\mathcal{S}_1}) \cup (W \cap U_{\mathcal{S}_2}) \cup \dots \cup (W \cap U_{\mathcal{S}_7})  $.  We shall now define $\Pi$ on each of the open sets forming the above union. Since the neighborhoods $U_{\mathcal{S}_i}, i\in \lbrace 1, \dots, 7 \rbrace$ are disjoint, the gluing process from the local Poisson structure around the singularities to a regular Poisson structure is the same for all neighborhoods $U_{\mathcal{S}_i}$. Thus, it is enough to show the construction for one singularity, and we simplify this by considering just $U_{\mathcal{S}}$. The reader might find it useful to 
refer to figure (\ref{F:Venn}) during the following construction, taking into account that the sets $\tau_{j}$ are open and $\tau_{j}\not\subset\tau_{j+1}$ for $j=1,...,4$. 

\begin{figure}
 \centering
  \includegraphics[width=0.5\textwidth]{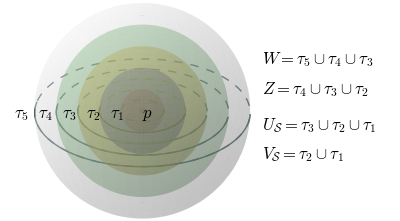}
  \caption{Schematic representation of the gluing process around a singular point $p\in \mathcal{S}$.}
  \label{F:Venn}
\end{figure}

In order to  define $\Pi$ on each connected component of $W\cap U_{\mathcal{S}}$ start by noticing that both bivectors $\pi_F$ and $\pi_{\mathcal{S}}$ were already defined on this set as shown in Section \ref{Poisson} and remark \ref{P:pi_F}.  Moreover, when restricted to this open set, both bivectors define regular rank two Poisson structures possessing the same symplectic foliation. Hence, by Lemma \ref{L:2}, there exists a non-vanishing $C^\infty$ function $g$ such that
\begin{equation*}
\pi_{\mathcal{S}}=g\pi_{F} 
\end{equation*}
on the component of  $ W\cap U_{\mathcal{S}}$ that we are working with.
By changing $\pi_{\mathcal{S}}$ to $-\pi_{\mathcal{S}}$ if necessary, we can assume that $g$ is positive.

Consider a partition of unity, defined by two nonnegative auxiliary smooth cut-off functions $\sigma$ and $\rho$ on a small neighborhood  $Z$ around our connected component of
$\overline{W\cap U_{\mathcal{S}}}$ that satisfy:
\begin{equation*}
\sigma(p)=\begin{cases} 0\; \mbox{if} \; p\notin U_{\mathcal{S}} \\
1\; \mbox{if} \; p\notin W\end{cases} \qquad \rho(p)=\begin{cases} 1\; \mbox{if} \; p\notin U_{\mathcal{S}} \\
0\; \mbox{if} \; p\notin W\end{cases}
\end{equation*}

We can now extend $\Pi$ to $W\cap U_{\mathcal{S}}$ as
\begin{equation*}
\Pi(p)=(g(p)\sigma(p)+\rho(p))\pi_F(p), \qquad \mbox{for} \quad p\in W\cap {\mathcal{S}}.
\end{equation*}
This is a smooth interpolation between the definitions of $\Pi$ on $\overline{V_{\mathcal{S}}}$ 
and on $W\setminus (U_{\mathcal{S}} )$. Indeed, as a point $p\in W\cap U_{\mathcal{S}}$ approaches
$V_{\mathcal{S}}$, the bivector $\Pi(p)$ approaches $\pi_{\mathcal{S}}$. Similarly, as  a point $p\in W\cap U_{\mathcal{S}}$ leaves $U_{\mathcal{S}}$ 
the  bivector $\Pi(p)$ approximates to $\pi_{F}$. Notice that $\Pi$ as defined above is a Poisson structure (satisfies the Jacobi identity) on $W\cap U_{\mathcal{S}}$
in virtue of Lemma \ref{L:1}.

As the function $g\sigma+\rho$ is non-negative, we conclude that the
symplectic leaves of $\Pi$ on $W\cap U_{\mathcal{S}}$ coincide with the symplectic leaves of $\pi_F$.
By Proposition \ref{P:pi_F} these are the pieces of the fibers of $F$ that lie within $W\cap U_{\mathcal{S}}$.

Therefore we have produced a Poisson structure with the claimed properties. If the closure of every symplectic leaf in our construction is compact, then the Poisson structure we obtain is complete. This concludes the proof of Theorem (\ref{thm:Bott-Morse}).

\section{Obstructions to Poisson structures on Bott-Morse foliations}\label{S7}

If we keep the codimension-one hypothesis, then the next dimension where Poisson structures supported on Bott-Morse foliations could be found is dimension 5, with 4-dimensional leaves.\\ 

{\bf Example 3:} Consider $S^4$ canonically embedded as the unit sphere in ${\bf R}^5$. Let $h$ be a height function, defined by projecting $S^4$ onto a closed interval on an axis. Then $-\nabla h$ is a Morse  function from $S^4$ to a closed interval $I$. It has two types of level sets. One type is diffeomorphic to $S^3$, coming from preimages of the interior points of the interval. The others are two points, corresponding to $\partial I$. We now define a Bott-Morse foliation on the smooth 5-manifold $S^4\times S^1$ as follows. Set the leaves of the foliation to be given by $(-\nabla h)^{-1}(t)\times S^1$, for $t$ in $I$. There are two kinds of leaves, corresponding to the two kinds of level sets of $-\nabla h$. The first kind is diffeomorphic to $S^3\times S^1$, the second kind is diffeomorphic to a circle. As the singular components of this foliation are locally modelled by a Morse function, it is an example of a Bott-Morse foliation. Observe that, as $S^3\times S^1$ has trivial second cohomology it can not be symplectic. Therefore this Bott-Morse foliation on $S^4\times S^1$ does not admit a compatible Poisson structure. This can be seen as a special case of Example 2.6 in \cite{SS11}.\\

This example leads to the next:\\

{\bf Question:} Does a codimension-one Bott-Morse foliation $\mathcal{F}$ with symplectic leaves admit a Poisson structure supported on $\mathcal{F}$\,?\\

In higher dimensions there is the further complexity of inequivalent symplectic structures.\\

{\bf Example 4:} As in the previous example, a codimension-one Bott-Morse foliation may be defined on $S^3\times S^2$ with the help of a Morse function on $S^3$. Consider a height function with respect to the standard embedding so that the inverse images are now 2-spheres. The foliation thus defined on $S^3\times S^2$ has leaves diffeomorphic to $S^2\times S^2$, and two singular components whose points as a set are homeomorphic to $S^2$. Let $\omega$ be an area form for $S^2$. Notice that $S^2\times S^2$ may be given symplectic forms $\omega + \lambda \omega$, which are known to produce symplectic structures that are not equivalent when the value of $\lambda$ changes enough \cite{G85}. Here we face a different situation, as the foliation itself is not enough to determine the Poisson structure. There are potentially countably many different Poisson structures that could be associated to the underlying Bott-Morse foliation, coming from the diversity of equivalence classes of symplectic structures on the leaves.\\ 

Given the classification results obtained in \cite{SS09,SS11} there are cohomological restrictions to the existence of Poisson structures on Bott-Morse foliations with singularities of center type. In order for the leaves of the symplectic foliation to be even dimensional, the total space $M$ for the Bott-Morse foliation would have to have dimension $2n+1$,  for $n \in \bf{N}$. A second restriction comes from the topology of the leaves $L$, which are $S^n$-bundles over $N_j$.  For $L$ to admit a symplectic structure, the codimension of $N_j$ can only be 2 or 3, so that the associated sphere bundles are either $S^1$- or $S^2$-bundles. These cases present the possibility to build a compatible symplectic form.  In any other case the  $S^k$--bundles do not admit symplectic structures. 

On one hand if ${\rm codim}(N_j)=2$, it is not clear that the total space $L$ of the bundle can admit a symplectic structure when $\dim(M)\geq 7$. In the case $\dim(M)=5$ there are examples of $S^1$--bundles that admit symplectic structures (see, for example, McMullen-Taubes \cite{MM99} or Friedl-Vidussi \cite{FV11}).

On the other hand if ${\rm codim}(N_j)=3$, that is when $L$ is an $S^2$--bundle, it might be possible to extend our methods but only a rank $2$ Poisson structure could be constructed following our arguments in this paper. A general construction of Thurston provides conditions for total spaces of certain surface bundles to admit symplectic structures via symplectic fibrations \cite{T76}. One of the requirements is that the base of the fibration is symplectic, so an additional obstruction is that $N_j$ admits a symplectic form.  Hence, for $\dim(M) = 5, \mathrm{codim}(N_j) = 3$, it is possible to apply our construction to find new examples of Bott-Morse foliations with center singularities and compatible Poisson structures. 

Moreover, another possible extension of this last idea could involve Lefschetz fibrations with genus $0$ fibers, which are well known to admit symplectic structures on their total spaces.  The work of Donaldson on Lefschetz pencils \cite{D99} asserts that for a suitable cohomology class in the second de Rham cohomology of $L$, there is a symplectic structure on $L$ with symplectic fibers. As Lefschetz pencils are defined over $S^2$, the possibility of constructing a singular Poisson structure that is symplectic on the complement the Bott-Morse singularities would only hold in $\dim(M) = 5$.

\section{A remark on Poisson Cohomology}\label{S8}

Poisson cohomology displays interesting global characteristics of the geometry of Poisson structures. It reveals information about deformations of Poisson structures, which becomes relevant in deformation theory.  In general, calculating Poisson cohomology is very hard as the cohomology groups can be infinite dimensional, and there is no general method to compute them. However, it is known in certain cases. For instance, for symplectic manifolds, the Poisson cohomology is isomorphic to its de Rham cohomology $H_{\pi}^{*}(M) \simeq H_{\textnormal{dR}}^{*}(M)$.  For a compact Lie algebra $\mathfrak{g}$ with corresponding  Lie-Poisson structure $W$ on $\mathfrak{g}^{*}$, denote by $H_{\text{Lie}}^{k}(\mathfrak{g}^{*})$ the Lie algebra cohomology of $\mathfrak{g}$ and by $\text{Cas}(\mathfrak{g}^{*}, W)$ the space of Casimirs of $(\mathfrak{g}^{*}, W)$. In this case $H^{k}_{\pi}(\mathfrak{g}^{*}, W) = H_{\text{Lie}}^{k}(\mathfrak{g}^{*}) \otimes \text{Cas}(\mathfrak{g}^{*}, W) $. 

Next we will describe the Poisson cohomology for the structures described in this work, but first we recall its definition.  Consider the space of multivector fields $\mathfrak{X}^{*}(M) = \Gamma(\Lambda^{*} TM)$ and 
$$\dots \longrightarrow \mathfrak{X}^{k-1}(M) \stackrel{d_\pi}{\longrightarrow} \,  \mathfrak{X}^{k}(M) \stackrel{d_\pi}{\longrightarrow} \, \mathfrak{X}^{k+1}(M) \stackrel{d_\pi}{\longrightarrow} \dots $$ 
The operator $d_{\pi} \colon \mathfrak{X}^{\bullet}(M)\longrightarrow \mathfrak{X}^{\bullet + 1}(M), d_{\pi}(X) = \left[\pi,X \right]_{\mathrm{SN}}$ is a differential of the exterior algebra $\mathfrak{X}(M) = \oplus_{k} \mathfrak{X}^{k}(M)$ and, due to the Poisson condition, it satisfies $d_{\pi}^{2}=0$.   The pair $(\mathfrak{X}(M), d_\pi)$  is called the {\em Poisson} or {\em Lichnerowicz-Poisson cochain complex}, and 
$$H^{k}_{\pi}(M) := \frac{\ker\left( d_{\pi}\colon \mathfrak{X}^{k}(M) \rightarrow \mathfrak{X}^{k+1}(M) \right) }{ \mathrm{Im}\left( d_{\pi}\colon \mathfrak{X}^{k-1}(M) \rightarrow \mathfrak{X}^{k}(M) \right)}$$
with $k\in \mathbb{N}_0$ are called the {\em Poisson cohomology spaces} of $(M, \pi)$. 

Let us comment briefly on the interpretation of the Poisson cohomology groups. The zeroeth Poisson cohomology group $H^{0}_{\pi}(M)$ is generated by Casimir functions, whereas $H^{1}_{\pi}(M)$ measures the Poisson vector fields that are not Hamiltonian.  The cohomology group $H^{2}_{\pi}(M)$ is the quotient of infinitesimal deformations of $\pi$ over trivial deformations, and $H^{3}_{\pi}(M)$ reflects the obstructions to formal deformations of $\pi$. 

The next tables (\ref{tablesix}), (\ref{tableseven}), and (\ref{tableeight}) summarize the Poisson cohomology of the structures associated to Bott-Morse foliations in dimension 3.  The first table presents the cases where the Poisson cohomology is isomorphic to the Lie algebra cohomology. This is a consequence of the defined Lie algebras being compact of semi-simple type (\cite{DZ05}, p. 49).  There are three cases where this does not apply.  When $\dim(N_j)=1$ and the Morse index is 0 or 2, its corresponding Lie algebra is $\mathfrak{e}(2)$ and this is not semisimple. For $\dim(N_j) = 0$ with Morse index 2 and $\dim(N_j)=1$ with Mose index 1 it is not known to us what are the corresponding Lie algebras. Direct computations with linear coefficients give a partial result on $H_{\pi}^{*}(M)$, and we describe the dimensions of the cohomology groups and its generators.  In the following tables (\ref{tablesix}), (\ref{tableseven}), and (\ref{tableeight}), we use the simplified notation $\partial_{ik}:= \frac{\partial}{\partial x_i}\wedge \frac{\partial}{\partial x_k} $ to describe the bivector fields. 

\begin{table}[H]
\centering
\resizebox{.9\textwidth}{!}{%
\begin{tabular}{|c|c|c|c|c|}
\hline
\multicolumn{5}{|l|}{$\dim(N_{j})=0$}                                                                                                                                                                                               \\ \hline
\multirow{3}{*}{Morse Index} & \multirow{3}{*}{Poisson Bivector}                                                & \multirow{3}{*}{Lie Algebra}                   & \multicolumn{2}{c|}{Poisson Cohomology}                          \\
                             &                                                                                  &                                                & \multicolumn{2}{c|}{$H_{\pi}^{\bullet}(M) \cong H_{LA}^{\bullet}(\mathfrak{g})$}  \\ \cline{4-5} 
                             &                                                                                  &                                                & $H_{\pi}^{0}(M)$                 & $H_{\pi}^{k}(M)\quad \textnormal{for } \,  k\geq 1$ \\ \hline

\multirow{3}{*}{0}           & \multirow{6}{*}{$x_{3}\partial_{12} -x_{2}\partial_{13} + x_{1}\partial_{23} $}  & \multirow{6}{*}{$\mathfrak{so}(3)$}            & $\simeq{\bf R},$              & \multirow{3}{*}{0}            \\
                             &                                                                                  &                                                & generated by                     &                               \\
                             &                                                                                  &                                                & $\langle x_{1}^{2}+x_{2}^{2}+x_{3}^{2}\rangle$  &                               \\ \cline{1-1} \cline{4-5} 

\multirow{3}{*}{3}           &                                                                                  &                                                & $\simeq{\bf R},$              & \multirow{3}{*}{0}            \\
                             &                                                                                  &                                                & generated by                     &                               \\
                             &                                                                                  &                                                & $\langle -x_{1}^{2}-x_{2}^{2}-x_{3}^{2}\rangle$ &                               \\ \hline
\multirow{3}{*}{1}           & \multirow{3}{*}{$-x_{3}\partial_{12} +x_{2}\partial_{13} + x_{1}\partial_{23} $} & \multirow{3}{*}{$\mathfrak{sl}(2,{\bf R})$} & $\simeq{\bf R}$               & \multirow{3}{*}{0}            \\
                             &                                                                                  &                                                & generated by                     &                               \\
                             &                                                                                  &                                                & $\langle -x_{1}^{2}+x_{2}^{2}+x_{3}^{2}\rangle$ &                               \\ \hline
\end{tabular}%
}
\vspace{0.5mm}
\caption{Lie algebra and Poisson Cohomology of corresponding Poisson Structures.}
\label{tablesix}
\end{table}

\begin{table}[]
\centering
\resizebox{\textwidth}{!}{%
\begin{tabular}{|c|c|c|c|c|c|}
\hline
\multicolumn{6}{|l|}{$\dim(N_{j})=0$}                                                                                                                                                                                                                                                                                      \\ \hline
\multirow{3}{*}{Morse Index} & \multirow{3}{*}{Poisson Bivector}                                                & \multicolumn{4}{c|}{\multirow{2}{*}{Poisson cohomology with linear coefficients}}                                                                                                                        \\
                             &                                                                                  & \multicolumn{4}{c|}{}                                                                                                                                                                                    \\ \cline{3-6} 
                             &                                                                                  & $H_{\pi}^{0}(M)$                                & $H_{\pi}^{1}(M)$                                                          & $H_{\pi}^{2}(M)$                    & $H_{\pi}^{3}(M)$                     \\ \hline
\multirow{3}{*}{2}           & \multirow{3}{*}{$-x_{3}\partial_{12} -x_{2}\partial_{13} + x_{1}\partial_{23} $} & $\simeq{\bf R}$                              &                                                                           &                                     & \multicolumn{1}{l|}{}                \\
                             &                                                                                  & generated by                                    & 0                                                                         & 0                                   & 0                                    \\
                             &                                                                                  & $\langle -x_{1}^{2}-x_{2}^{2}+x_{3}^{2}\rangle$ & \multicolumn{1}{l|}{}                                                     & \multicolumn{1}{l|}{}               & \multicolumn{1}{l|}{}                \\ \hline
\multicolumn{6}{|l|}{$\dim(N_{j})=1$}                                                                                                                                                                                                                                                                                      \\ \hline
\multirow{3}{*}{0}           & \multirow{6}{*}{$ -x_{2}\partial_{13} + x_{1}\partial_{23} $}                    & \multirow{7}{*}{$\simeq{\bf R},$}                            & \multirow{7}{*}{$\simeq{\bf R}$}                                                        & \multirow{7}{*}{$\simeq{\bf R}$}                  & \multirow{7}{*}{$\simeq{\bf R}$}                   \\
                             &                                                                                  & \multirow{8}{*}{generated by}                                    & \multirow{8}{*}{generated by}                                                              & \multirow{8}{*}{generated by}                        & \multirow{8}{*}{generated by}                         \\
                             &                                                                                  & \multirow{9}{*}{$\langle -x_{1}^{2}+x_{2}^{2}\rangle$}           & \multirow{9}{*}{$\langle x_{1}\partial_{1}+x_{2}\partial_{2}\rangle$} & \multirow{9}{*}{$\langle x_{3}\partial_{12}\rangle$} & \multirow{9}{*}{$\langle x_{3}\partial_{123}\rangle$} \\ \cline{1-1} 
\multirow{3}{*}{2}           &                   														&                              &                                                         &                   &                   \\
                             &                                                                                  &                                     &                                                               &                         &                          \\
                             &                                                                                  &            & \multicolumn{1}{l|}{} &  &  \\ \cline{1-1} \cline{1-2} 
\multirow{3}{*}{1}           & \multirow{3}{*}{$ x_{2}\partial_{13} + x_{1}\partial_{23} $}                    &                              &                                                         &                   &                   \\
                             &                                                                                  &                                     &                                                              &                    &                         \\
                             &                                                                                  &            & \multicolumn{1}{l|}{} &  &  \\ \hline
\end{tabular}%
}
\vspace{1mm}
\caption{Poisson Cohomology with linear coefficients associated to the Poisson structures.}
\label{tableseven}
\end{table}

\begin{table}[]
\centering
\resizebox{\textwidth}{!}{%
\begin{tabular}{|c|c|c|c|c|c|}
\hline
\multicolumn{6}{|l|}{$\dim(N_{j})=0$}                                                                                                                                                                                                                                                                                      \\ \hline
\multirow{3}{*}{Morse Index} & \multirow{3}{*}{Poisson Bivector}                                                & \multicolumn{4}{c|}{\multirow{2}{*}{Poisson cohomology with quadratic coefficients}}                                                                                                                        \\
                             &                                                                                  & \multicolumn{4}{c|}{}                                                                                                                                                                                    \\ \cline{3-6} 
                             &                                                                                  & $H_{\pi}^{0}(M)$                                & $H_{\pi}^{1}(M)$                                                          & $H_{\pi}^{2}(M)$                    & $H_{\pi}^{3}(M)$                     \\ \hline
\multirow{3}{*}{2}           & \multirow{3}{*}{$-x_{3}\partial_{12} -x_{2}\partial_{13} + x_{1}\partial_{23} $} & $\simeq{\bf R}$                              &                                                                           &                                     & \multicolumn{1}{l|}{}                \\
                             &                                                                                  & generated by                                    & 0                                                                         & 0                                   & 0                                    \\
                             &                                                                                  & $\langle -x_{1}^{2}-x_{2}^{2}+x_{3}^{2}\rangle$ & \multicolumn{1}{l|}{}                                                     & \multicolumn{1}{l|}{}               & \multicolumn{1}{l|}{}                \\ \hline
\multicolumn{6}{|l|}{$\dim(N_{j})=1$}                                                                                                                                                                                                                                                                                      \\ \hline
\multirow{3}{*}{0}           & \multirow{6}{*}{$ -x_{2}\partial_{13} + x_{1}\partial_{23} $}                    & \multirow{7}{*}{$\simeq{\bf R},$}                            & \multirow{7}{*}{$\simeq{\bf R}^{2}$}                                                        & \multirow{7}{*}{$\simeq{\bf R}$}                  & \multirow{7}{*}{$\simeq{\bf R}$}                   \\
                             &                                                                                  & \multirow{8}{*}{generated by}                                    & \multirow{8}{*}{generated by}                                                              & \multirow{8}{*}{generated by}                        & \multirow{8}{*}{generated by}                         \\
                             &                                                                                  & \multirow{9}{*}{$\langle -x_{1}^{2}+x_{2}^{2}\rangle$}           & \multirow{9}{*}{$\langle ax^2_{1}\partial_{3}, bx^2_{2}\partial_{3}\rangle$, $a\not=b$} & \multirow{9}{*}{$\langle x_{3}^{2}\partial_{12}\rangle$} & \multirow{9}{*}{$\langle x_{3}^{2}\partial_{123}\rangle$} \\ \cline{1-1} %
\multirow{3}{*}{2}           &                   														&                              &                                                         &                   &                   \\
                             &                                                                                  &                                     &                                                               &                         &                          \\
                             &                                                                                  &            & \multicolumn{1}{l|}{} &  &  \\ \cline{1-1} \cline{1-2} 
\multirow{3}{*}{1}           & \multirow{3}{*}{$ x_{2}\partial_{13} + x_{1}\partial_{23} $}                    &                              &                                                         &                   &                   \\
                             &                                                                                  &                                     &                                                              &                    &                         \\
                             &                                                                                  &            & \multicolumn{1}{l|}{} &  &  \\ \hline
\end{tabular}%
}
\vspace{1mm}
\caption{Poisson Cohomology with quadratic coefficients associated to the Poisson structures.}
\label{tableeight}
\end{table}

\newpage

\end{document}